\newtheorem{thm}{Theorem}[section]
\newtheorem{prop}[thm]{Proposition}
\newtheorem{cor}[thm]{Corollary}
\newtheorem*{cor*}{Corollary}
\newtheorem*{thm*}{Theorem}
\theoremstyle{definition}
\newtheorem{definition}[thm]{Definition}
\theoremstyle{plain} 
\newcommand{\thistheoremname}{}
\newtheorem{genericthm}[thm]{\thistheoremname}
\newtheorem*{genericthm*}{\thistheoremname}
\newenvironment{namedthm*}[1]
{\renewcommand{\thistheoremname}{#1}%
	\begin{genericthm*}}
	{\end{genericthm*}}
\theoremstyle{remark}
\newtheorem{remark}[thm]{Remark}
\numberwithin{equation}{section}
\newcommand{\R}{\mathbb{R}}
\newcommand{\V}{\mathsf{V}}
\newcommand{\SO}{\mathrm{SO}}
\newcommand{\SU}{\mathrm{SU}}
\newcommand{\G}{\mathrm{G}}
\newcommand{\eps}{\varepsilon}
\newcommand{\ASO}{\mathrm{ASO}}
\begin{document}

\title{Closed $\mathrm{G}_2$-structures with conformally flat metric}

\author{Gavin Ball}
\address{\textsc{Universit\'{e} du Qu\'{e}bec \`{a} Montr\'{e}al}, 	\textsc{D\'{e}partement de math\'{e}matiques}, \textsc{Case postale 8888, succursale centre-ville, Montr\'{e}al (Qu\'{e}bec), H3C 3P8, Canada}}
\email{gavin.ball@cirget.ca}
\urladdr{http://cirget.math.uqam.ca/~gavin/} 

\begin{abstract}
This article classifies closed $\G_2$-structures such that the induced metric is conformally flat. It is shown that any closed $\G_2$-structure with conformally flat metric is locally equivalent to one of three explicit examples. In particular, it follows from the classification that any closed $\G_2$-structure inducing a metric that is both conformally flat and complete must be equivalent to the flat $\G_2$-structure on $\R^7.$
\end{abstract}

\maketitle

\tableofcontents

\section{Introduction}

A $\G_2$-structure consists of a 7-manifold $M$ endowed with a differential 3-form $\varphi \in \Omega^3 (M)$ that is pointwise equivalent to the 3-form on $\text{Im} \, {\mathbb{O}} \cong \mathbb{R}^7$ constructed from the multiplication table of the octonions. Such a 3-form induces, in a non-linear fashion, a Riemannian metric $g_{\varphi}$ on $M,$ and there is an interesting interplay between the geometry of $\varphi$ and $g_{\varphi}.$ For instance, a $\G_2$-structure $\varphi$ which is both closed and coclosed is called \emph{torsion-free}, and for torsion-free $\G_2$-structures the metric $g_{\varphi}$ is Ricci-flat and has Riemannian holonomy a subgroup of $\G_2$ \cite{BryExcept}. Outside of the torsion-free setting, there are many other interesting examples of $\G_2$-structures for which $\varphi$ satisfies weaker first-order differential conditions.

Given a Riemannian metric $g,$ it is generally a difficult question to decide if there is a $\G_2$-structure $\varphi$ satisfying one of these weaker first-order conditions and such that $g_{\varphi} = g.$ Particular focus has been paid in the literature to the case when $g$ is an Einstein metric \cites{CleyIv07,Bry05,FFM15,ManUg19,FiRaf15}, especially in the homogeneous setting, and many open problems remain in this area.

One particularly interesting class of $\G_2$-structures is the class satisfying the \emph{closed} condition $d \varphi = 0.$ Motivation for the study of closed $\G_2$-structures comes from the construction of compact manifolds with holonomy $\G_2$. All known methods for constructing such manifolds start with a compact manifold $M$ with a closed $\G_2$-structure $\varphi$ such that the 2-form $d^{{*}_{\varphi}} \varphi$ is `small' in some suitable sense, and use a result of Joyce \cite{Joyce96} to perturb $\varphi$ to a torsion-free $\G_2$-structure $\tilde{\varphi}$ on $M.$ Closed $\G_2$-structures are also thought to be the suitable initial structures for which the \emph{Laplacian flow} \cites{Bry05,LotWeiAna,LotWeiLapShi,LotWeiStab} might be expected to produce torsion-free $\G_2$-structures.

The purpose of this article is to classify closed $\G_2$-structures such that the induced metric $g_{\varphi}$ is conformally flat. The result of this classification is given in the following theorem, which is Theorem \ref{thm:Class} of \S\ref{sect:ConfClosedG2}.

\begin{thm*}
	A closed $\G_2$-structure $\varphi$ with (locally) conformally flat induced metric $g_{\varphi}$ is, up to constant rescaling, locally equivalent to one of the three examples on $\R^7, \R^7 \setminus \lbrace 0 \rbrace,$ and $\Lambda^2_+ \mathbb{CP}^2$ presented in \S\ref{ssect:Egs}.
\end{thm*}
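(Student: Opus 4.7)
The strategy is the moving frame method on the adapted $\G_2$-frame bundle $F_{\G_2}(M) \to M$: reformulate both hypotheses (closed and conformally flat) as structure equations, reduce by the $\G_2$-orbit type of the torsion, and integrate.

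\emph{Step 1 (Torsion and curvature of a closed $\G_2$-structure).} Under the hypothesis $d\varphi = 0,$ only the component $\tau_2 \in \Omega^2_{14}(M)$ of the intrinsic torsion is nonzero; equivalently, it is encoded by a single $\G_2$-equivariant function $T : F_{\G_2}(M) \to \mathfrak{g}_2,$ subject to the Bianchi identity $d\tau_2 \wedge \varphi = 0.$ Bryant's formulas then express $\mathrm{Ric}(g_\varphi)$ as an algebraic plus first-order differential expression in $T,$ and express the full Riemann tensor $R$ in terms of $T$ and $\nabla T.$

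\emph{Step 2 (Conformal flatness as vanishing of $\G_2$-components of $R$).} In dimension $7,$ conformal flatness is equivalent to $W = 0,$ i.e.\ to $R$ being the Kulkarni--Nomizu product of the Schouten tensor $P$ with the metric. This forces $R$ to lie in a particular $\G_2$-submodule of the space of algebraic curvature tensors, and the other irreducible $\G_2$-components of $R$ must vanish identically. Using Step 1, each such component becomes a polynomial plus first-order expression in $T$ and $\nabla T.$ Some of these are purely algebraic in $T$; the remaining ones determine $\nabla T$ as a specific algebraic function of $T$ (together with the Schouten piece).

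\emph{Step 3 (Orbit-type analysis of $T$).} The pointwise $\G_2$-orbits in $\mathfrak{g}_2$ are finite in number and distinguishable by the characteristic invariants of $T$ (viewed as a skew endomorphism in $\Omega^2_{14} \cong \mathfrak{g}_2$). The algebraic part of the system from Step 2 selects the admissible orbit types. I expect exactly three: the zero orbit and two nonzero orbit types (likely one where $T$ has a cyclic stabilizer and one with a larger stabilizer coming from an $\mathrm{SU}(2)$- or $\mathrm{U}(2)$-like subgroup of $\G_2$). This case analysis is the \emph{main obstacle}: showing that the algebraic conditions imposed by $W = 0$ cut out precisely these orbits requires a careful decomposition of the $\G_2$-representation space in which the Weyl tensor lives, and bookkeeping of how each irreducible component depends on $T.$

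\emph{Step 4 (Reduction and integration).} Within each orbit type, reduce $F_{\G_2}(M)$ to the principal subbundle for the stabilizer of $T.$ On this reduced bundle the expressions for $\nabla T$ from Step 2, together with the $\G_2$-structure equations, form a system of constant rank that is Frobenius-integrable (rigidity follows from the fact that the classification is local and the symmetry group of each candidate model has the expected dimension). Integrating yields, up to constant rescaling, exactly the three models of \S\ref{ssect:Egs}: the flat $\G_2$-structure on $\R^7$ (zero torsion), the homothety-invariant example on $\R^7 \setminus \lbrace 0 \rbrace,$ and the example on $\Lambda^2_+ \mathbb{CP}^2.$ The completeness corollary then follows by inspection, since only the $\R^7$ model has a complete induced metric.
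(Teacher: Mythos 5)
Your Steps 1 and 2 are consistent with the paper: conformal flatness is the vanishing of the three $\G_2$-irreducible components of the Weyl tensor, and by the curvature formulas this prescribes the second-order invariants $H$, $C$, $S$ (equivalently $\nabla T$ together with the $\V_{0,2}$-part of the curvature) as explicit quadratic expressions in $T$. The gap is in Step 3. The adjoint orbits of $\G_2$ in $\mathfrak{g}_2$ are not finite in number: they are parametrized by the closed Weyl chamber $A = \lbrace \mu_1 E_{23} + \mu_2 E_{45} - (\mu_1+\mu_2)E_{67} \mid 0 \leq \mu_1 \leq \mu_2 \rbrace$, a two-dimensional cone. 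More importantly, the conformal flatness conditions impose \emph{no} algebraic constraint on the value of $T$ at a point: after substituting the prescribed $H$, $C$, $S$ into the structure equations, the exterior derivatives of the resulting system are identities, so Cartan's theorem on prescribed coframings produces, for \emph{every} $T_0 \in \mathfrak{g}_2$, a local solution with $T(u_0) = T_0$, unique up to local isomorphism (and automatically real-analytic once it is $C^2$). Hence the moduli of germs is the entire two-dimensional set $A$, and your expectation that the algebraic part of the system selects finitely many admissible orbit types is false --- there is no algebraic part, and there are not three orbits but a two-parameter family.

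Consequently Step 4 cannot proceed as described: the three models do not correspond to three orbit types to be integrated separately on stabilizer-reduced bundles. Indeed the orbit type of $T$ is not even constant on the individual models --- on $\Lambda^2_+ \mathbb{CP}^2$ the stabilizer of $T$ is a maximal torus on generic orbits but jumps to a copy of $\mathrm{U}(2)$ on the singular orbit $\lbrace r = 0\rbrace$. What actually closes the argument is the opposite of a finiteness claim: one must verify that the three explicit families, together with the action of constant rescaling, realize \emph{every} point of $A$. The flat example gives $[T] = 0$; the $\R^7 \setminus \lbrace 0 \rbrace$ example sweeps out the wall $\mu_1 = \mu_2$ of the chamber; and the cohomogeneity-one $\Lambda^2_+ \mathbb{CP}^2$ example traces a curve transverse to the scaling rays (cut out by $(x-y)(x+2y)(2x+y) = -16$) whose $\R_+$-dilates fill the remainder. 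This surjectivity computation, combined with Cartan's uniqueness statement that two germs with the same adjoint orbit $[T]$ at a point are isomorphic, is the substance of the proof and is missing from your proposal.
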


Theorem \ref{thm:Class} is a strong rigidity result. There is a vague analogy between closed $\G_2$-structures and so-called \emph{almost K\"ahler structures}, which consist of a symplectic manifold and a compatible metric. However, in contrast to Theorem \ref{thm:Class}, it is known that there are infinitely many almost K\"ahler structures with conformally flat metric \cite{CDDH99}.

Of the closed $\G_2$-structures that feature in the classification, only the flat $\G_2$-structure on $\R^7$ has $g_{\varphi}$ complete, and this has the following consequence, which is Corollary \ref{cor:CompleteCFlt} of \S\ref{sect:ConfClosedG2}.

\begin{cor*}
	Let $(M, \varphi)$ be a 7-manifold endowed with a closed $\G_2$-structure such that $g_\varphi$ is conformally flat and complete. Then $\varphi$ is locally equivalent to the flat $\G_2$-structure $\phi$ on $V = \R^7$ and $M$ is the quotient of $V$ by a discrete group of $\G_2$-automorphisms.
\end{cor*}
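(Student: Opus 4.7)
The plan is to apply the classification Theorem \ref{thm:Class} to pin down a single local model, then exclude two of the three models using completeness, and finally recover $M$ as a quotient of $V$ via a standard developing map. By Theorem \ref{thm:Class}, every point of $M$ has a neighborhood $\G_2$-equivalent (up to a constant rescaling of $\varphi$, which may be absorbed into a global homothety) to an open subset of one of the three examples of \S\ref{ssect:Egs}. Pointwise $\G_2$-invariants of $\varphi$ --- for example the norm of the torsion $d^{*_\varphi}\varphi$, or scalar curvature invariants of $g_\varphi$ --- take values in disjoint sets on the three models, so connectedness of $M$ forces a single model $(X,\varphi_X)$ to govern $(M,\varphi)$ at every point.

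The second step is to rule out $X = \R^7 \setminus \{0\}$ and $X = \Lambda^2_+ \mathbb{CP}^2$ using completeness. A $\G_2$-equivalence is in particular a local isometry of induced metrics, so $(M, g_\varphi)$ is locally isometric to $(X, g_{\varphi_X})$. On the universal cover $\widetilde{M}$, endowed with its lifted and still complete metric, the local isometries assemble (both possible targets being simply connected) into a developing map $\mathrm{dev} : \widetilde{M} \to X$ that is a local isometry. A local isometry from a complete Riemannian manifold to a connected Riemannian manifold is a Riemannian covering, and in particular the target inherits completeness. Direct inspection of the explicit metrics in \S\ref{ssect:Egs} rules this out for both non-flat models: on $\R^7 \setminus \{0\}$ a radial geodesic reaches the puncture in finite time, and on $\Lambda^2_+ \mathbb{CP}^2$ the fibre-warping coefficient produces a radial geodesic that escapes in finite time. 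Thus $(X, \varphi_X) = (V, \phi)$.

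Running the developing argument once more in the flat case, $\mathrm{dev} : \widetilde{M} \to V$ is a local isometry that pulls $\phi$ back to the lift of $\varphi$, and since both $\widetilde{M}$ and $V$ are simply connected the covering $\mathrm{dev}$ is a diffeomorphism. The deck group $\Gamma = \pi_1(M)$ then acts on $V$ through the holonomy representation into the automorphism group $\mathrm{Aut}(V, \phi) = V \rtimes \G_2 \subset \ASO(7)$, giving $M = V/\Gamma$ with $\Gamma$ a discrete group of $\G_2$-automorphisms. The principal obstacle in this plan is the completeness analysis in the second step: it relies on the explicit metric formulas of the two non-flat models in \S\ref{ssect:Egs}, and once their incompleteness is verified the rest is routine rigidity for Riemannian $(G,X)$-structures.
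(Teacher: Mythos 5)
Your overall strategy --- invoke Theorem \ref{thm:Class}, discard the two non-flat models using completeness, and finish with a covering-space argument in the flat case --- is the natural one, and your final step (once one knows $\varphi$ is torsion-free, $\mathrm{dev}:\widetilde M\to V$ is a covering onto the simply connected homogeneous model $V$, hence an isomorphism, and $M=V/\Gamma$) is correct. The gap is in the middle. The models $N_2=\R^7\setminus\{0\}$ and $N_3=\Lambda^2_+\mathbb{CP}^2$ are cohomogeneity one, not homogeneous, so $(M,\varphi)$ is not a $(G,X)$-structure in the standard sense, and the existence of a developing map $\mathrm{dev}:\widetilde M\to X$ is not automatic: to assemble the local charts you must continue one of them along arbitrary paths in $\widetilde M$, and the classical continuation theorem for local isometries of real-analytic Riemannian manifolds requires the \emph{target} to be complete --- precisely the property you are trying to refute for $X=N_2,N_3$. (Compare: flat $\R^7$ is everywhere locally isometric to the open unit ball, which is simply connected and real-analytic, yet admits no local isometry from $\R^7$; simple connectivity of the target is not the relevant hypothesis.) Unless you show that every local automorphism of $N_2$ and $N_3$ is the restriction of a global one, ``the local isometries assemble into a developing map'' is unsupported. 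A second, smaller problem: the subsets of $A$ realised by the three models ($\{0\}$, the ray $\mu_1=\mu_2>0$, and the region $\mu_1<\mu_2$) partition $A$ but are not open, so ``pointwise invariants take values in disjoint sets, hence connectedness forces a single model'' does not follow; a priori $[T](M)$ could meet several of these loci, and your architecture needs a single global target $X$.

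Both issues disappear if you run the continuation in the opposite direction and localise the argument. Suppose $\tau(p)\neq 0$ for some $p\in M$. By Theorem \ref{thm:Class} a neighbourhood $U$ of $p$ is equivalent, after rescaling, to an open set $U'\subset N_i$ with $i\in\{2,3\}$; by Theorem \ref{thm:CFlatExist} everything in sight is real-analytic. Since $N_i$ is connected and simply connected and the target $M$ is complete, the local isometry $U'\to U\subset M$ extends by continuation along paths to a local isometry $F:N_i\to M$. Now choose a unit-speed curve $\gamma:[0,L)\to N_i$ of finite length $L$ along which $\mathrm{Scal}(g_{\varphi_{N_i}})\to-\infty$; the computations in \S\ref{ssect:Egs} produce such curves in both models. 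Then $F\circ\gamma$ lies in the closed metric ball of radius $L$ about $F(\gamma(0))$, which is compact by completeness of $g_\varphi$, so $\mathrm{Scal}(g_\varphi)$ is bounded along $F\circ\gamma$; this contradicts $\mathrm{Scal}(g_\varphi)(F(\gamma(s)))=\mathrm{Scal}(g_{\varphi_{N_i}})(\gamma(s))\to-\infty$. Hence $\tau\equiv 0$, the structure equations reduce to those of $\G_2\ltimes V$, and your flat-case covering argument (now a genuine $(G,X)$-argument with $G=\G_2\ltimes V$ acting on $X=V$) completes the proof.
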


Using the method of the moving frame, it is possible to rephrase the problem of finding a closed $\G_2$-structure with conformally flat induced metric in terms of the existence of a coframing on a manifold satisfying a prescribed set of structure equations. Prescribed coframing problems are classical, and are the subject of powerful existence and uniqueness theorems going back to the work of \'Elie Cartan \cites{Cart04,BryEDSNotes}. Theorem \ref{thm:Class} is proven by exhibiting enough examples to apply the uniqueness part of one of these theorems.

Let $\varphi$ be a closed $\G_2$-structure. The 2-form $\tau = d^{{*}_{\varphi}} \varphi$ is called the \emph{torsion 2-form} of $\varphi.$ Let $\lambda \in \R$ be a constant. A closed $\G_2$-structure $\varphi$ is called \emph{$\lambda$-quadratic} \cites{Bry05,Ball19} if it satisfies the equation
\begin{equation}\label{eq:LamQuadint}
d\tau= \tfrac{1}{7}|\tau|^2\varphi + \lambda \left( \tfrac{1}{7} \left\lvert \tau \right\rvert_{\varphi}^2 + {*}_{\varphi} \left(\tau \wedge \tau \right) \right).
\end{equation}
For $\lambda = 1/2,$ the $\lambda$-quadratic equation is equivalent to the Einstein equation for $g_{\varphi},$ and for $\lambda = 1/6$ the $\lambda$-quadratic equation is equivalent to the \emph{extremally Ricci-pinched} condition, which has been studied in several works \cites{Lauret17,LauNic19,Ball19,Bry05,LauNic19b,FiRafLap18}.

As a consequence of the vanishing of a certain $\G_2$-irreducible component of the Weyl tensor (see equation (\ref{eq:RiemCurvDecompWeyl20})), closed $\G_2$-structures with conformally flat metric are $\lambda$-quadratic for $\lambda = -1/8.$ Thus, the examples of \S\ref{ssect:Egs} show that $\lambda$-quadratic $\G_2$-structures exist for $\lambda = -1/8.$ The only other values of $\lambda$ for which non-trivial solutions of (\ref{eq:LamQuadint}) are currently known to exist are $\lambda = -1, 1/6, 1/3, 2/5,$ and $3/4$ \cite{Ball19}. Examples of $\lambda$-quadratic closed $\G_2$-structures with $\lambda = 2/5$ are constructed in the final section of this article via an ansatz motivated by the form of the conformally flat equations, see Theorem \ref{thm:Lam25Class}.

\subsection{Organisation}

Section \ref{sect:GrpG2} contains background information on the group $\G_2$ and its representation theory. 
 
In Section \ref{sect:G2Struct}, the structure equations for closed $\G_2$-structures are developed up to second order, the space of second-order diffeomorphism invariants of a closed $\G_2$-structure is described, and formulas (\ref{eqs:riemcurv}) are derived for the irreducible components of the Riemannian curvature of $g_{\varphi}$ in terms of these invariants.

Section \ref{sect:ConfClosedG2} contains the main results of the article. In \S\ref{ssect:CFlatdiffanal}, equations for conformal flatness are recorded, the structure equations for closed $\G_2$-structures with a conformally flat metric are written down, and some first consequences are derived. In \S\ref{ssect:Egs}, three examples are given of closed $\G_2$-structures satisfying the conformal flatness condition. In \S\ref{ssect:ConfClass}, the existence and uniqueness Theorem \ref{thm:CFlatExist} is stated and proved. Theorem \ref{thm:Class} then follows by combining Theorem \ref{thm:CFlatExist} with a detailed understanding of the examples presented in \S\ref{ssect:Egs}.

In Section \ref{sect:Lam25}, closed $\G_2$-structures solving a set of equations (\ref{eqs:Lam25HCS}) formally very similar to the equations for conformal flatness are studied. In particular, this set of equations implies the $\lambda$-quadratic condition with $\lambda=2/5.$ Due to the formal similarities with the equations for conformal flatness of $g_{\varphi}$, it is possible to classify all closed $\G_2$-structures solving these equations using techniques analogous to those used in \S\ref{sect:ConfClosedG2}.

\subsection{Acknowledgments} The work in this article forms part of my PhD thesis \cite{Ball19}. I thank my advisor Robert Bryant for his encouragement and for many helpful discussions. I would also like to thank the Simons Foundation for funding as a graduate student member of the Simons Collaboration on Special Holonomy in Geometry, Analysis and Physics.

\section{The group $\G_2$}\label{sect:GrpG2}

This section collects background information on the group $\G_2$ and its representation theory that will be used throughout the rest of the article. The exposition is for the most part a distillation of that of Bryant \cite{Bry05}, with the addition of extra material on representations of $\G_2$ that will be used in later sections.

\subsection{Definition and basic properties}

Let $V = \R^7.$ The group $\mathrm{GL}(V)$ acts on $\Lambda^3 \left( V^* \right)$ with two open orbits, $\Lambda^3_+$ and $\Lambda^3_-$. The stabiliser of an element in either of these orbits is a real form of the complex simple group $\G^{\mathbb{C}}_2$. The stabiliser of an element of $\Lambda^3_+$ is a compact form, while the stabiliser of an element of $\Lambda^3_-$ is a split form. Let $e^1,...,e^7$ denote the canonical basis of $V^{*}$. Using the shorthand notation $e^{ijk}=e^i \wedge e^j \wedge e^k$ for wedge products, the element
$$\phi = e^{123}+e^{145}+e^{167}+e^{246}-e^{257}-e^{347}-e^{356}$$
is an element of $\Lambda^3_+$, the \emph{standard 3-form on V}, and the group $\G_2$ is defined by
$$\G_2 = \{ A \in \mathrm{GL}(V) \mid A^* \phi = \phi \}.$$

The action of $\G_2$ on $V$ preserves the metric
$$g_{\phi}=\left(e^1\right)^2+\left(e^2\right)^2+\left(e^3\right)^2+\left(e^4\right)^2
+\left(e^5\right)^2+\left(e^6\right)^2+\left(e^7\right)^2$$
and volume form
$$\mathrm{vol}_{\phi} = e^1 \wedge e^2 \wedge e^3 \wedge e^4 \wedge e^5 \wedge e^6 \wedge e^7,$$
and it follows that $\G_2$ is a subgroup of $SO(7)$. The Hodge star operator determined by $g_{\phi}$ and $\mathrm{vol}_\phi$ is denoted by $*_\phi.$ Note that $\G_2$ also fixes the 4-form
\begin{equation*}
*_\phi \phi = e^{4567} + e^{2367} + e^{2345} + e^{1357} - e^{1346} - e^{1256} - e^{1247}.
\end{equation*}

\subsubsection{Bryant's $\varepsilon$ symbol}

When working with the group $\G_2$ it is often very convenient to use an $\varepsilon$-notation introduced by Bryant \cite{Bry05}. Let $\varepsilon$ denote the unique symbol that is skew-symmetric in three of four indices and satisfies
\begin{subequations}
	\begin{align}
	\phi &= \tfrac{1}{6} \varepsilon_{ijk} e^{ijk}, \\
	{*}_{\phi}\phi &= \tfrac{1}{24} \varepsilon_{ijkl} e^{ijkl}.
	\end{align}
\end{subequations}

The $\varepsilon$ symbol satisfies the useful identities
\begin{subequations}
	\begin{align}
	\varepsilon_{ijk} \varepsilon_{ijl} &= 6 \delta_{kl}, \\
	\varepsilon_{ijq} \varepsilon_{ijkl} &= 4 \varepsilon_{qkl}, \\
	\varepsilon_{ipq} \varepsilon_{ijk} &= \varepsilon_{pqjk} + \delta_{pj} \delta_{qk} - \delta_{pk} \delta_{qj}, \\
	\varepsilon_{ipq} \varepsilon_{ijkl} &= \delta_{pj} \varepsilon_{qkl} -\delta_{jq} \varepsilon_{pkl} +\delta_{pk} \varepsilon_{jql} -\delta_{kq} \varepsilon_{jpl} +\delta_{pl} \varepsilon_{jkq} -\delta_{lq} \varepsilon_{jkp}.
	\end{align}
\end{subequations}

\subsection{Representation theory of $\G_2$}\label{sect:G2reps}

The group $\G_2$ is a compact simple Lie group of rank two. Thus, each irreducible representation of $\G_2$ is indexed by a pair of integers $(p,q)$ corresponding to the highest weight of the representation with respect to a fixed maximal torus in $\G_2$ endowed with a fixed base for its root system. The irreducible representation associated to $(p,q)$ is denoted by $\mathsf{V}_{p,q}.$ The representations that feature in this work are $\mathsf{V}_{p,0}, \mathsf{V}_{0,q}$ and $\mathsf{V}_{1,1}.$

\subsubsection{The representations $\mathsf{V}_{p,0}$} 

The fundamental representation $\mathsf{V}_{1,0}$ is the standard representation $V = \R^7$ used to define the group $\G_2.$ The group $\G_2$ acts transitively on the unit sphere $S^6 \subset V,$ with isotropy group $\mathrm{SU}(3).$ The representation $\V_{p,0}$ for $p \geq 0$ is isomorphic to $\mathrm{Sym}^p_0(\V_{1,0}).$

\subsubsection{The representations $\V_{0,q}$}\label{sssect:V0qreps}

The other fundamental representation of $\G_2$, $\V_{0,1}$, is isomorphic to the adjoint representation $\mathfrak{g}_2.$ The Lie algebra $\mathfrak{g}_2$ may be defined using the $\varepsilon$ symbol as
\begin{equation}
\mathfrak{g}_2 = \left\lbrace a_{ij} e_{i} \otimes e^j \mid a_{ij}=-a_{ji}, \:\: \varepsilon_{ijk}a_{jk}=0 \right\rbrace.
\end{equation}

A maximal torus for $\mathfrak{g}_2$ is
\begin{equation}
\mathfrak{t} = \left\lbrace \mu_1 E_{23} + \mu_2 E_{45} + \mu_3 E_{67} \mid \mu_1, \mu_2, \mu_3 \in \R, \:\: \mu_1+\mu_2+\mu_3 = 0 \right\rbrace,
\end{equation}
where $E_{ij}$ denotes the element $e_i \otimes e^j - e_j \otimes e^i.$ By Cartan's Theorem on maximal tori, every element of $\mathfrak{g}_2$ is conjugate to an element of $\mathfrak{t}$. The Weyl group of $\G_2$ is the dihedral group $\mathrm{D}_6$ of order 12, and a fundmental Weyl chamber in $\mathfrak{t}$ is given by $0 \leq \mu_1 \leq \mu_2.$

The representations $\V_{0,q}$ for $q \geq 0$ are the irreducible constituents of $\mathrm{Sym}^q(\mathfrak{g}_2)$ of highest weight. In particular, for $q=2,$
\begin{equation}
\V_{0,2} = \left\lbrace s_{ijkl} e_{i} \otimes e^j \otimes e_{k} \otimes e^l \: \vline\ \: \begin{aligned}
& s_{ijkl} = -s_{jikl}, \: s_{ijkl} = s_{klij}, \\
& \varepsilon_{ijk} s_{jklm} =0, \: s_{ijik} = 0, \: s_{ijjk}=0
\end{aligned} \right\rbrace,
\end{equation}
and there is an irreducible decomposition
\begin{equation}
\mathrm{Sym}^2(\mathfrak{g}_2) \cong \V_{0,2} \oplus \V_{2,0} \oplus \V_{0,0}.
\end{equation}

\subsubsection{The representation $\V_{1,1}$} The representation $\V_{1,1}$ has dimension 64. It is the highest weight constituent of the tensor product $\V_{1,0} \otimes \V_{0,1}.$ Explicitly,
\begin{equation}
\V_{1,1} = \left\lbrace c_{ijk} e_i \otimes e_j \otimes e^k \mid \: c_{ijk} = - c_{ikj}, \: \eps_{mjk} c_{ijk} = 0, \: \eps_{mij} c_{ijk} = 0 \right\rbrace,
\end{equation}
and there is a decomposition
\begin{equation}
V \otimes \mathfrak{g}_2 \cong \V_{1,1} \oplus \V_{2,0} \oplus V.
\end{equation}

\subsubsection{$\G_2$-decomposition of exterior forms}

Of fundamental importance to the study of $\G_2$-structures on manifolds is the irreducible decomposition of the exterior powers of the standard representation. This decomposition is given by
\begin{subequations}
	\begin{align}
	\Lambda^2 (V^*) & = \Lambda^2_7  \oplus  \Lambda^2_{14},\\
	\Lambda^3 (V^*) & = \Lambda^3_1  \oplus  \Lambda^3_7 \oplus \Lambda^3_{27},\\
	\Lambda^4 (V^*) & = \Lambda^4_1  \oplus  \Lambda^4_7 \oplus \Lambda^4_{27},\\
	\Lambda^5 (V^*) & = \Lambda^5_7  \oplus  \Lambda^2_{14},
	\end{align}
\end{subequations}
where
\begin{subequations}
\begin{align}
	\Lambda^2_7  &= \left\lbrace *_{\phi} \left( \alpha \wedge *_{\phi} \phi \right) \mid \alpha \in \Lambda^1 (V^*) \right\rbrace \cong V \cong \V_{1,0} \\
	\Lambda^2_{14}  &= \left\lbrace \beta \in \Lambda^1 (V^*) \mid \beta \wedge \phi = 2 *_{\phi} \beta \right\rbrace \cong \mathfrak{g}_2 \cong \V_{0,1}, \\
	\Lambda^3_1  &= \left\lbrace r \phi \mid r \in \mathbb{R} \right\rbrace \cong \mathbb{R} \cong \V_{0,0} \\
	\Lambda^3_7  &= \left\lbrace *_{\phi} \left( \alpha \wedge \phi \right) \mid \alpha \in \Lambda^1 (V^*) \right\rbrace \cong V \cong \V_{1,0}, \\
	\Lambda^3_{27}  &= \left\lbrace \gamma \in \Lambda^3 (V^*) \mid \gamma \wedge \phi = 0, \gamma \wedge *_{\phi} \phi = 0 \right\rbrace \cong \text{Sym}^2_0 (V) \cong \V_{2,0}, 
\end{align}
\end{subequations}
and the Hodge star gives an isomorphism $\Lambda^p_j \cong \Lambda^{7-p}_j.$

\section{Structure equations for closed $\G_2$-structures}\label{sect:G2Struct}

In this section, the structure equations for closed $\G_2$-structures are derived up to second order. The space of second-order diffeomorphism invariants for closed $\G_2$-structures is described and formulas (\ref{eqs:riemcurv}) are given for the Riemannian curvature of the induced metric in terms of the first and second order invariants of the $\G_2$-structure.

\begin{definition}
	Let $M$ be an oriented $7$-manifold.  A \textit{$\textrm{G}_2$-structure} on $M$ is a differential $3$-form $\varphi \in \Omega^3(M)$ such that $\varphi|_x \in \Lambda^3_+(T_x^*M)$ at each $x \in M$.  That is, at each $x \in M$ there exists a coframe $u \colon T_xM \to V = \mathbb{R}^7$ for which $\varphi|_x = u^*(\phi),$ where $\phi$ is the standard 3-form on $V.$
	
	A $\G_2$-structure $\varphi$ is called \emph{closed} if $d \varphi = 0.$
\end{definition}

A 7-manifold $M$ admits a $\G_2$-structure if and only if it is orientable and spin \cite{Bry05}. It is an interesting open problem, about which next to nothing is known, to determine which orientable, spin 7-manifolds admit closed $\G_2$-structures.

A $\G_2$-structure $\varphi$ defines a $\G_2$-structure on $M$ in the sense of $\G$-structures as follows. Let $\pi: \mathcal{F} \to M$ denote the $V$-coframe bundle of $M$, i.e. the principal right $\textrm{GL}(V)$-bundle over $M$ whose fibre over $x \in M$ consists of $V$-coframes $u: T_x M \xrightarrow{\sim} V.$ The $\G_2$-structure $\varphi$ defines a $\G_2$-subbundle $\mathcal{B} \subset \mathcal{F}$ via
\begin{equation}
\mathcal{B}=\left\lbrace u : T_xM \xrightarrow{\sim} V \mid x \in M, u^* \phi = \varphi_x \right\rbrace.
\end{equation}

Every $\text{G}_2$-structure $\varphi$ on $M$ induces a Riemannian metric $g_\varphi$ and an orientation form $\text{vol}_\varphi$ on $M$ via the inclusion $\G_2 < \SO(7).$ In general, the assignment $\varphi \mapsto g_{\varphi}$ is not injective. However, it is the case that two closed $\G_2$-structures inducing isometric metrics must be equivalent.

On a manifold endowed with a $\G_2$-structure the $\G_2$-type decomposition of the exterior powers $\Lambda^p V^*$ extends to a decomposition of the bundles $\Lambda^p T^* M$ and their spaces of sections $\Omega^p (M).$

\subsection{The first structure equation}

Let $\varphi$ be a closed $\G_2$-structure and let $\mathcal{B}$ denote the induced $\G_2$-coframe bundle. Denote by $\omega$ the $V$-valued tautological 1-form on $\mathcal{B},$ i.e. the $V$-valued 1-form on $\mathcal{B}$ defined by 
\begin{equation}
\omega \left( v \right) = u \left( \pi_{*} \left( v \right) \right) \:\:\:\: \text{for all} \:\:\:\: v \in T_u \mathcal{B}.
\end{equation}
In components, $\omega$ may be written $\omega = \omega_i e^i,$ where $e^i$ is the canonical basis of $V^* = \R^7.$ The form $\omega$ is $\pi$-semibasic and has the reproducing property $\eta^{*} (\omega ) = \eta$ for any local section $\eta$ of $\mathcal{B}.$ Of course, $\omega$ is the pullback of the tautological form on the coframe bundle $\mathcal{F}$ via the inclusion $\mathcal{B} \subset \mathcal{F}.$

The pullbacks of the 3-form $\varphi$ and the 4-form $*_\varphi \varphi$ to $\mathcal{B}$ may be written in terms of the components of $\omega$ as
\begin{align}
\pi^* \varphi &= \tfrac{1}{6} \varepsilon_{ijk} \omega_{i} \wedge \omega_j \wedge \omega_k, \label{eq:phionB} \\
\pi^* {*}_{\varphi}\varphi &= \tfrac{1}{24} \varepsilon_{ijkl} \omega_{i} \wedge \omega_j \wedge \omega_k \wedge \omega_{l}. \label{eq:strphionB}
\end{align}
Similarly, the metric $g_{\varphi}$ and volume form $\textrm{vol}_{\varphi}$ pull back to $\mathcal{B}$ as
\begin{align}
\pi^* g_{\varphi} &= \omega_1^2 + \omega_2^2 + \omega_3^2 + \omega_4^2 + \omega_5^2 + \omega_6^2 + \omega_7^2, \\
\pi^* \textrm{vol}_{\varphi} &= \omega_1 \wedge \omega_2 \wedge \omega_3 \wedge \omega_4 \wedge \omega_5 \wedge \omega_6 \wedge \omega_7.
\end{align}

 Let $\mathcal{F}_{\SO(7)}$ denote the oriented orthonormal coframe bundle of the metric $g_{\varphi}.$ There is an inclusion $\mathcal{B} \subset \mathcal{F}_{\SO(7)}$. By the Fundamental Lemma of Riemannian Geometry, there exists a unique $\mathfrak{so}(7)$-valued 1-form $\psi = \psi_{ij} e_i \otimes e^j,$ the Levi-Civita connection form of $g_{\varphi},$ so that the equation
 \begin{equation}\label{eq:CISO7}
 d \omega_i = - \psi_{ij} \wedge \omega_j
 \end{equation}
 holds on $\mathcal{F}_{\SO(7)}.$
 
Restricted to $\mathcal{B} \subset \mathcal{F}_{\text{SO}(7)}$, the Levi-Civita $1$-form $\psi$ is no longer a connection $1$-form in general.  Indeed, according to the splitting $\mathfrak{so}(7) = \mathfrak{g}_2 \oplus V$, there is a decomposition
\begin{equation}
\psi_{ij} = \theta_{ij} + \eps_{ijk} \gamma_k.
\end{equation}
Here, $\theta = \theta_{ij} e_i \otimes e^j$ is a $\mathfrak{g}_2$-valued connection form on $\mathcal{B}$ (the \emph{natural connection} on $\mathcal{B}$ in the parlance of $\G$-structures), and $\gamma = \gamma_i e_i$ is a $V$-valued $\pi$-semibasic 1-form on $\mathcal{B}.$ Since $\gamma$ is $\pi$-semibasic,
\begin{equation}\label{eq:gamdecomp}
\gamma_i = T_{ij}\omega^j
\end{equation}
for some $\mathrm{End}(V)$-valued function $T = T_{ij} e_i \otimes e^j \colon \mathcal{B} \to \mathrm{End}(V)$.  The $1$-form $\gamma$, and hence the functions $T_{ij}$, encodes the \emph{torsion} of the $\text{G}_2$-structure $\varphi.$ From differentiating equation (\ref{eq:phionB}), closure of $\varphi,$ i.e. the equation $d \varphi = 0$, is equivalent to $T$ taking values in $\mathfrak{g}_2 \subset \mathrm{End}(V),$ i.e. the equation $\varepsilon_{ijk} T_{ij} = 0.$ Substitution of equation (\ref{eq:gamdecomp}) into equation (\ref{eq:CISO7}) gives \emph{Cartan's first structure equation} for closed $\G_2$-structures,
\begin{align}\label{eq:CartanIG2}
d\omega_i = -\theta_{ij} \wedge \omega + \eps_{ijk}T_{kl} \omega_j \wedge \omega_l,
\end{align}
where $T_{ij}$ satisfies $\varepsilon_{ijk} T_{ij} = 0.$ The function $T$ is known as the \emph{torsion tensor} of the closed $\G_2$-structure $\varphi.$ It follows from the general theory of equivalence that the function $T$ is a complete first-order diffeomorphism invariant of $\varphi.$

\subsubsection{The torsion 2-form} The 2-form $\tilde{\tau}$ on $\mathcal{B}$ defined by
\begin{equation}
\tilde{\tau} = 3 \, T_{ij} \omega_i \wedge \omega_j
\end{equation}
is invariant under the $\G_2$-action on $\mathcal{B}.$ It follows that $\tilde{\tau}$ is the pullback to $\mathcal{B}$ of a well-defined 2-form on $M,$ which will be denoted by $\tau.$ This 2-form $\tau$ is an element of $\Omega^2_{14}(M),$ and is called the \emph{torsion 2-form} of the closed $\G_2$-structure $\varphi.$ Differentiation of equation (\ref{eq:strphionB}) using the first structure equation (\ref{eq:CartanIG2}) yields that
\begin{equation}\label{eq:dstrphitau}
d *_{\varphi} \varphi = \tau \wedge \varphi.
\end{equation}
In fact, equation (\ref{eq:dstrphitau}) is often taken to be the definition of $\tau.$

If the function $T,$ or equivalently the 2-form $\tau$, vanishes, then the closed $\G_2$-structure is said to be torsion-free. In this special case, the 3-form $\varphi$ is parallel with respect to the metric $g_{\varphi},$ and the restricted holonomy group of the metric $g_\varphi$ is a subgroup of $\G_2$ \cite{BryExcept}.

\subsection{The second structure equation}

Let $\nabla$ denote the covariant derivative associated to the connection 1-form $\theta$. The exterior derivative of the $\mathfrak{g}_2$-valued function $T$ on $\mathcal{B}$ may be written
\begin{equation}\label{eq:dT}
dT = \nabla T ( \omega ) + \theta \cdot T,
\end{equation}
where $\nabla T$ is a function on $\mathcal{B}$ taking values in $\mathfrak{g}_2 \otimes V^*$, and the notation $\theta \cdot T$ represents the action of the $\mathfrak{g}_2$-valued 1-form $\theta$ on the $\mathfrak{g}_2$-valued function $T$ via the adjoint action of $\mathfrak{g}_2.$

There is an irreducible decomposition 
\begin{equation}
\mathfrak{g}_2 \otimes V \cong \V_{1,1} \oplus \V_{2,0} \oplus V,
\end{equation}
so the function $\nabla T$ may be decomposed as $\nabla T = C + H + X,$ where $C, H,$ and $X$ are functions on $\mathcal{B}$ taking values in $\V_{1,1}, \V_{2,0},$ and $V$ respectively. In components,
\begin{equation}\label{eq:nabT}
\begin{aligned}
\left( \nabla T \right)_{ijk} =& C_{kij} + \eps_{ijl} H_{lk} +3 \eps_{jkl} H_{li} + 3 \eps_{kil}H_{lj} \\
 &+ \eps_{ijkl} X_l + 2 \delta_{jk}X_i - 2 \delta_{ki} X_j,
\end{aligned}
\end{equation}
where $C_{kij}$ and $H_{ij}$ satisfy the symmetries and trace conditions described in \S\ref{sect:G2reps}.

\begin{prop}
	For any closed $\G_2$-structure $\varphi,$ the function $X$ defined by (\ref{eq:nabT}) vanishes identically.
\end{prop}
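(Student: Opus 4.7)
The plan is to exploit the single integrability condition that the closure $d\varphi = 0$ yields at second order. Specifically, applying $d$ to the identity \eqref{eq:dstrphitau} and using $d\varphi = 0$ gives
\[
0 = d^2 (*_\varphi \varphi) = d\tau \wedge \varphi - \tau \wedge d\varphi = d\tau \wedge \varphi.
\]
By the $\G_2$-decomposition of $\Lambda^3(V^*)$, the kernel of the map $\alpha \mapsto \alpha \wedge \varphi$ on $\Lambda^3(V^*)$ is exactly $\Lambda^3_1 \oplus \Lambda^3_{27}$: the $\Lambda^3_1$ summand dies because $\varphi \wedge \varphi = 0$ (any two monomials of the standard form $\phi$ share an index), the $\Lambda^3_{27}$ summand dies by definition, and $\Lambda^3_7$ maps isomorphically onto its image by Schur's lemma. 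Thus $d\tau \wedge \varphi = 0$ is equivalent to the vanishing of the $\Omega^3_7$-component of $d\tau$.

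Next I would compute $d\tau$ on $\mathcal{B}$. Starting from $\pi^*\tau = 3 T_{ij}\omega_i \wedge \omega_j$ and substituting $dT_{ij} = (\nabla T)_{ijk}\omega_k + (\theta\cdot T)_{ij}$ together with the first structure equation \eqref{eq:CartanIG2}, the $\theta$-terms must cancel (as they do, since $d\pi^*\tau = \pi^*d\tau$ is semibasic), leaving a semibasic $3$-form with coefficients linear in $\nabla T$ and quadratic in $T$. Projecting this onto the $\Omega^3_7 \cong V$ component, using the explicit decomposition \eqref{eq:nabT}, isolates the desired relation on $X$.

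The crucial representation-theoretic input is the following. In $\mathfrak{g}_2 \otimes V \cong \V_{1,1} \oplus \V_{2,0} \oplus V$, the summand $V$ has multiplicity one and is spanned precisely by the $X$-component of $\nabla T$; so by Schur's lemma only $X$ can contribute to the $V$-projection of any $\G_2$-equivariant output. On the other hand, $V$ does not appear at all in $\mathfrak{g}_2 \otimes \mathfrak{g}_2$: recall $\mathrm{Sym}^2\mathfrak{g}_2 \cong \V_{0,0}\oplus\V_{2,0}\oplus\V_{0,2}$, and one has the analogous decomposition $\Lambda^2\mathfrak{g}_2 \cong \V_{0,1}\oplus\V_{3,0}$. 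Hence the quadratic-torsion term makes no contribution to the $\V_{1,0}$-component of $d\tau$, and the $\Omega^3_7$-part of $d\tau$ is forced to be a scalar multiple of $X$.

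The main obstacle is to verify that the scalar relating the $\Omega^3_7$-projection of $d\tau$ to $X$ is nonzero; a vanishing scalar would leave $X$ unconstrained. This nonvanishing is established by a short direct computation with the $\varepsilon$-identities from Section~\ref{sect:GrpG2} (for instance, by evaluating both sides on a convenient test element such as $X = e_1$ and pairing against a $\V_{1,0}$-dual vector), which is the only genuinely calculational step in the argument.
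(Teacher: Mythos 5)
Your proposal is correct and follows essentially the same route as the paper: both exploit $0=d^2({*}_\varphi\varphi)=d\tau\wedge\varphi$, observe that this kills exactly the $\Omega^3_7$-component of $d\tau$, and reduce to checking that this component is a nonzero multiple of $X$ (the paper computes ${*}_\varphi(d\tau\wedge\varphi)=-72\,X_i\omega_i$, which is precisely the constant you defer to a direct $\varepsilon$-computation). Your Schur-lemma observation that $\V_{1,0}$ does not occur in $\mathfrak{g}_2\otimes\mathfrak{g}_2$, so the quadratic-in-$T$ terms cannot contribute, is a nice conceptual supplement to the paper's bare calculation.
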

\begin{proof}
	This is a consequence of the vanishing of the $\Omega^6_7(M)$ component of the equation $d^2 \left( *_{\varphi} \varphi \right).$ One may calculate using equations (\ref{eq:CartanIG2}) and (\ref{eq:dT}) that, on $\mathcal{B},$
	\begin{equation}
	0 = \pi^* {*}_{\varphi} d^2 \left( {*}_{\varphi} \varphi \right) = \pi^* {*}_{\varphi} \left( d \tau \wedge \varphi \right)  = -72 X_i \omega_{i}. \qedhere
	\end{equation}
\end{proof}

Equation (\ref{eq:dT}) may be written in components as
\begin{equation}\label{eq:dTcomp}
dT_{jk}= C_{ijk}\omega_i + \left(\varepsilon_{jkl}H_{li}+ 3 \varepsilon_{kil}H_{lj}+3\varepsilon_{lij}H_{lk}\right)\omega_i + T_{jl}\theta_{lk}-T_{kl}\theta_{lj}.
\end{equation}

\subsubsection{The 3-form $d \tau$} The function $H$ on $\mathcal{B}$ may be written in terms of the components of 3-form $d \tau.$  By equation (\ref{eq:dTcomp}), the pullback of $d \tau$ to $\mathcal{B}$ is
\begin{equation}\label{eq:dtauH}
\pi^* d \tau = \eps_{ikl} \left( 21 H_{ij} - \tfrac{1}{7} T_{im}T_{mj} \right) \omega_{j} \wedge \omega_k \wedge \omega_l.
\end{equation}

\subsubsection{Curvature decomposition and the first Bianchi identity}

The derivative of the $\mathfrak{g}_2$-valued natural connection form $\theta$ takes the form
\begin{equation}
d\theta = -\theta \wedge \theta + K(\omega \wedge \omega),
\end{equation}
or in components,
\begin{equation}\label{eq:dthetcomp}
d\theta_{ij} = -\theta_{ik} \wedge \theta_{kj} + K_{ijkl} \omega_k \wedge \omega_l
\end{equation}
where $K : \mathcal{B} \to \text{Hom}(\Lambda^2(V), \mathfrak{g}_2)$ is the \emph{$\G_2$-curvature function} of $\varphi.$ The space $\mathrm{Hom}(\Lambda^2(V), \mathfrak{g}_2)$ decomposes into $\G_2$-irreducible representations as
\begin{equation}\label{eq:CurvDecomp}
\begin{aligned}
 \text{Hom}(\Lambda^2(V), \mathfrak{g}_2) &\cong \left(\V_{0,1} \oplus \V_{1,0} \right)  \otimes \V_{0,1} \\
& \cong \left( \V_{0,2} \oplus \V_{2,0} \oplus \V_{0,1} \oplus \V_{0,0} \right) \oplus \left(\V_{1,1} \oplus \V_{2,0} \oplus \V_{1,0} \right),
\end{aligned}
\end{equation}
so the function $K$ may decomposed analogously as
\begin{equation}
K_{ijkl} = S_{ijkl} + J_{ijkl} + U_{ijkl} + r_{ijkl} + \eps_{mkl} \left( B_{ijm} + L_{ijm} + W_{ijm} \right),
\end{equation}
where
\begin{subequations}
	\begin{align}
	J_{ijkl} =& \left( 2 \eps_{pij} \eps_{qkl} -3 \eps_{pik} \eps_{qjl} -3 \eps_{pil} \eps_{qjk} \right) J_{pq} \\
	&+ 3 \left( \delta_{ik} J_{jl} - \delta_{il} J_{jk} - \delta_{jk} J_{il} + \delta_{jl} J_{ik} \right), \nonumber \\
	U_{ijkl} =& \eps_{pij} \eps_{qkl} U_{pq} - 3 \left( \delta_{ik} U_{jl} - \delta_{il} U_{jk} - \delta_{jk} U_{il} + \delta_{jl} U_{ik} \right), \\
	r_{ijkl} =& \left( \eps_{ijkl} - 2 \delta_{ik} \delta_{jl} + 2 \delta_{jk} \delta_{il} \right) r, \\
	L_{ijm} =& \eps_{pij} L_{pm} + 3 \eps_{pjm} L _{pi} + 3 \eps_{pmi} L_{pj}, \\
	W_{ijm} =& \eps_{ijml} W_l -2 \delta_{im} W_j + 2 \delta_{jm} W_i,
	\end{align}
\end{subequations}
and $S = S_{ijkl} e_i \otimes e^j \otimes e_k \otimes e^l,$ $J = J_{ij} e_i e_j,$ $U = U_{ij} e_i \otimes e^j,$ $r,$ $B = B_{ijk} e_i \otimes e_j \otimes e^k,$ $L = L_{ij} e_i e_j,$ and $W = W_i e_i$ are functions on $\mathcal{B}$ taking values in $\V_{0,2},$ $\V_{2,0},$ $\V_{0,1},$ $\R,$ $\V_{1,1},$ $\V_{2,0},$ and $V$ respectively, so that these symbols satisfy the symmetries and trace conditions described in \S\ref{sect:G2reps}.

It is known that the $\G_2$ curvature function of a torsion-free $\G_2$-structure takes values in the $\V_{0,2}$ summand of (\ref{eq:CurvDecomp}), so by abstract principles the other summands may be expressed in terms of $T$ and $\nabla T.$ This can be done concretely by solving the first Bianchi identity, i.e. the equation $d^2 \omega = 0$. The result is
\begin{equation}\label{eq:BianchI}
\begin{aligned}
U&=0, \: W=0, \: Q=0, \\
B &= C, \\
r &= \tfrac{1}{14} T_{ij}T_{ij}, \\
J_{ij} &= - \tfrac{7}{12} H_{ij} - \tfrac{5}{24} \left( T_{ik}T_{kj} + \tfrac{1}{7} \delta_{ij} T_{kl}T_{kl} \right), \\
L_{ij} &= - \tfrac{4}{3} H_{ij}  - \tfrac{1}{3} \left( T_{ik}T_{kj} + \tfrac{1}{7} \delta_{ij} T_{kl}T_{kl} \right).
\end{aligned}
\end{equation}
It follows from the general theory that the quantities $C, H,$ and $S$ form a complete set of second order diffeomorphism invariants for a closed $\G_2$-structure.

\subsubsection{Curvature of $g_{\varphi}$}

The Levi-Civita form $\psi$ of the metric $g_\varphi$ is given in components by $\psi_{ij} = \theta_{ij} + \varepsilon_{ijk}T_{kl}\omega_l.$ With (\ref{eq:BianchI}) in hand, it is possible to write the Riemann curvature tensor, $\text{Riem}({g_\varphi}),$ of $g_{\varphi}$ in terms of the invariants $T, C, H,$ and $S$. The Riemann curvature tensor decomposes as an $\SO(7)$ representation into the scalar, traceless Ricci, and Weyl tensors,
\begin{equation}
\text{Riem}(g_{\varphi}) = \text{Scal}(g_{\varphi}) + \text{Ric}^0(g_\varphi) + \text{Weyl}(g_{\varphi}).
\end{equation}
In terms of $\G_2$-representations, $\mathrm{Scal}(g_{\varphi})$ is $\V_{0,0}$-valued, $\mathrm{Ric}^0(g_\varphi)$ is $\V_{2,0}$-valued, and $\mathrm{Weyl}(g_{\varphi})$ is $\V_{2,0} \oplus \V_{1,1} \oplus \V_{0,2}$-valued. Formulas for the $\G_2$-components of $\text{Riem}(g_{\varphi})$ are found by substituting equations (\ref{eq:dT}) and (\ref{eq:dthetcomp}) into the Riemannian second structure equation
\begin{equation}
d \psi_{ij} + \psi_{ik} \wedge \psi_{kj} = \tfrac{1}{2} \mathrm{Riem}_{ijkl} \omega_k \wedge \omega_l,
\end{equation}
and decomposing into $\G_2$-irreducible pieces. The result is
\begin{subequations}\label{eqs:riemcurv}
	\begin{align}
	\text{Scal}(g_{\varphi}) &= - 9 T_{ij}T_{ij} = - \tfrac{1}{2} \left\lvert \tau \right\rvert_{\varphi}^2 \label{eq:RiemCurvDecompScal} \\
	\text{Ric}^0(g_{\varphi})_{ij} &= -42H_{ij} - 12 \left( T_{ik}T_{kj} + \tfrac{1}{7} \delta_{ij} T_{kl}T_{kl} \right) \label{eq:RiemCurvDecompRic0} \\
	\left[\text{Weyl}(g_{\varphi})\right]^{2,0}_{ij} &= 4H_{ij} + \left( T_{ik}T_{kj} + \tfrac{1}{7} \delta_{ij} T_{kl}T_{kl} \right) \label{eq:RiemCurvDecompWeyl20} \\
	\left[\text{Weyl}(g_{\varphi})\right]^{1,1}_{ijk} &= C_{ijk} \label{eq:RiemCurvDecompWeyl11} \\
	\left[\text{Weyl}(g_{\varphi})\right]^{0,2}_{ijkl} &= S_{ijkl} - 3 \left( \tfrac{1}{16} \left( 2\eps_{pij}\eps_{qkl} - 3 \eps_{pik}\eps_{qjl} + 3 \eps_{pil}\eps_{qjk} \right) T_{pm}T_{mq} \right. \label{eq:RiemCurvDecompWeyl02} \\
	& \left. + \tfrac{3}{16} \left( \delta_{ik} T_{jm}T_{ml} - \delta_{il} T_{jm}T_{mk} - \delta_{jk} T_{im}T_{ml} + \delta_{jl} T_{im}T_{mk} \right) \right. \nonumber \\
	& \left. \tfrac{1}{12} T_{pq}T_{pq} \left( \eps_{ijkl} + \tfrac{1}{4} \delta_{ik} \delta_{jl} \right)  \right). \nonumber
	\end{align}
\end{subequations}

Expressions equivalent to these have appeared in the literature before. Formulas for the scalar and traceless Ricci curvatures have been found by Bryant \cite{Bry05}, and formulas for the Weyl curvature have been found by Cleyton and Ivanov \cite{CleyIv08}.

\section{Closed $\G_2$-structures with conformally flat metric}\label{sect:ConfClosedG2}

\subsection{The differential analysis}\label{ssect:CFlatdiffanal}

Let $M$ be a 7-manifold endowed with a closed $\G_2$-structure $\varphi,$ and suppose that the metric $g_{\varphi}$ is (locally) conformally flat. In dimensions greater than 3, local conformal flatness is equivalent to the vanishing of the Weyl curvature. Thus, from equations (\ref{eq:RiemCurvDecompWeyl20}-\ref{eq:RiemCurvDecompWeyl02}), the second order invariants $H, C,$ and $S$ of $\varphi$ must satisfy
\begin{subequations}\label{eqs:HCSConf}
	\begin{align}
	H_{ij} &= \tfrac{1}{4} T_{ik}T_{kj} + \tfrac{1}{28} \delta_{ij} T_{pq}T_{pq}, \label{eq:CFlatH} \\
	C_{ijk} &= 0, \label{eq:CFlatC} \\
	S_{ijkl} &= 3 T_{ij}T_{kl} +\tfrac{3}{16} \left( 2\eps_{pij}\eps_{qkl} - 3 \eps_{pik}\eps_{qjl} + 3 \eps_{pil}\eps_{qjk} \right) T_{pm}T_{mq} \label{eq:CFlatS} \\
	& + \tfrac{9}{16} \left( \delta_{ik} T_{jm}T_{ml} - \delta_{il} T_{jm}T_{mk} - \delta_{jk} T_{im}T_{ml} + \delta_{jl} T_{im}T_{mk} \right) \nonumber \\
	& + \tfrac{1}{16}  \left( 4 \eps_{ijkl} + \delta_{ik} \delta_{jl} - \delta_{il} \delta_{jk} \right) T_{pq}T_{pq}. \nonumber
	\end{align}
\end{subequations}

Substituting equations (\ref{eqs:HCSConf}) into the structure equations (\ref{eq:CartanIG2}), (\ref{eq:dTcomp}), and (\ref{eq:dthetcomp}) gives the equations
\begin{subequations}\label{eqs:CFlatStruct}
	\begin{align}
	d & \omega_i = -\theta_{ij} \wedge \omega_j + \eps_{ijk} T_{kl} \omega_j \wedge \omega_l, \label{eq:CFlatCartI} \\
	d & T_{jk} = \tfrac{1}{4} \left( \eps_{jkl} T_{lm}T_{mi} + 3 \eps_{kil} T_{lm}T_{mj} + 3 \eps_{ijl}T_{lm}T_{mk} + \eps_{ijk}T_{pq}T_{pq} \right)\omega_i \label{eq:CFlatdT} \\
	& + T_{jl}\theta_{lk}-T_{kl}\theta_{lj}, \nonumber \\
	d & \theta_{ij} + \theta_{ik} \wedge \theta_{kj} = \left( T_{ij}T_{kl} +\tfrac{1}{2} \left( 2\eps_{pij}\eps_{qkl} - 3 \eps_{pik}\eps_{qjl} + 3 \eps_{pil}\eps_{qjk} \right) T_{pm}T_{mq} \right. \label{eq:CFlatCartII} \\
	& - \left. \tfrac{5}{2} \left( \delta_{ik} T_{jm}T_{ml} - \delta_{il} T_{jm}T_{mk} - \delta_{jk} T_{im}T_{ml} + \delta_{jl} T_{im}T_{mk} \right) \right. \nonumber \\
	&+  \left. 2 \left( \eps_{ijkq} T_{lm}T_{mq} - \eps_{lijq}T_{km}T_{mq} \right) \right. \nonumber \\
	& \left. + \left( \tfrac{5}{4} \eps_{ijkl} - \delta_{ik} \delta_{jl} + \delta_{il} \delta_{jk} \right) T_{pq}T_{pq} \right) \omega_k \wedge \omega_l. \nonumber
	\end{align}
\end{subequations}

Equations (\ref{eqs:CFlatStruct}) will be collectively referred to as the \emph{structure equations} of a closed $\G_2$-structure with conformally flat metric. The exterior derivatives of (\ref{eqs:CFlatStruct}) are identities.

\subsubsection{First consequences} 

Equations (\ref{eq:CFlatH}) and (\ref{eq:dtauH}) taken together imply that for a closed $\G_2$-structure $\varphi$ with $g_{\varphi}$ conformally flat, the 3-form $d \tau$ satisfies
\begin{equation}
d \tau = \tfrac{1}{8} \left\lvert \tau \right\rvert^2_{\varphi} \varphi - \tfrac{1}{8} {*}_{\varphi} \left( \tau \wedge \tau \right).
\end{equation}
A closed $\G_2$-structure is said to be \emph{$\lambda$-quadratic} \cites{Bry05,Ball19} if $\varphi$ satisfies
\begin{equation}\label{eq:LamQuad}
d\tau= \tfrac{1}{7}|\tau|^2\varphi + \lambda \left( \tfrac{1}{7} \left\lvert \tau \right\rvert_{\varphi}^2 + {*}_{\varphi} \left(\tau \wedge \tau \right) \right),
\end{equation}
for some constant $\lambda \in \R.$ Thus, a closed $\G_2$-structure $\varphi$ with $g_{\varphi}$ conformally flat is $\lambda$-quadratic for $\lambda = -1/8.$

\begin{prop}\label{prop:CpctCFlat}
	If $(M, \varphi)$ is a compact 7-manifold endowed with a closed $\G_2$-structure such that $g_{\varphi}$ is (locally) conformally flat, then the metric $g_{\varphi}$ is flat and the $\G_2$-structure is locally equivalent to the standard $\G_2$-structure $\phi$ on $V = \R^7$
\end{prop}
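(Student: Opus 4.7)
The strategy is to show first that the torsion $\tau$ must vanish on a compact manifold, and then to deduce flatness and local equivalence to $\phi$ from the torsion-free condition together with the conformal flatness hypothesis.

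As noted in the preceding discussion, the conformal-flatness hypothesis is equivalent to the $\lambda$-quadratic condition with $\lambda = -1/8$, so we have
\[
d\tau = \tfrac{1}{8}\lvert\tau\rvert^2_\varphi\, \varphi - \tfrac{1}{8}*_\varphi(\tau \wedge \tau).
\]
I would then compute $\int_M d\tau \wedge *_\varphi \varphi$ in two different ways on the compact manifold $M$. On one hand, applying Stokes' theorem to $d(\tau \wedge *_\varphi\varphi)$ together with the identity $d*_\varphi\varphi = \tau \wedge \varphi$ gives
\[
\int_M d\tau \wedge *_\varphi\varphi = -\int_M \tau \wedge \tau \wedge \varphi,
\]
which, by the characterization of $\Omega^2_{14}$ (in which $\tau \wedge \varphi$ is a scalar multiple of $*_\varphi \tau$), is a specific multiple of $\int_M \lvert \tau\rvert^2_\varphi\, \mathrm{vol}_\varphi$. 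On the other hand, substituting the formula for $d\tau$ directly and using $\varphi \wedge *_\varphi\varphi = 7\,\mathrm{vol}_\varphi$ together with the $\G_2$-representation-theoretic evaluation of $*_\varphi(\tau \wedge \tau) \wedge *_\varphi\varphi$ yields a different scalar multiple of the same integral. Equating the two expressions forces $\int_M \lvert\tau\rvert^2_\varphi\,\mathrm{vol}_\varphi = 0$, hence $\tau \equiv 0$.

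Once $\tau \equiv 0$, the $\G_2$-structure is torsion-free, $\nabla \varphi = 0$, and the Riemannian holonomy of $g_\varphi$ is a subgroup of $\G_2$; in particular $g_\varphi$ is Ricci-flat. Combined with the vanishing of the Weyl tensor from conformal flatness this yields $\mathrm{Riem}(g_\varphi) \equiv 0$, so $g_\varphi$ is flat. Since $M$ is compact (hence $g_\varphi$ is complete), the Killing--Hopf theorem identifies the Riemannian universal cover of $M$ with $(\R^7, g_\phi)$, and the pullback $\tilde \varphi$ is a parallel 3-form of $\G_2$-type on $\R^7$ inducing the standard metric. Since $\SO(7)$ acts transitively on the set of such 3-forms (with stabilizer $\G_2$, by a dimension count on the fiber of $\Lambda^3_+ \to \{\text{metrics}\}$), there exists $A \in \SO(7)$ with $A^* \phi = \tilde \varphi$, exhibiting a local equivalence between $\varphi$ and $\phi$.

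The main obstacle is the integration-by-parts step: the argument produces nontrivial information only if the direct and Stokes-theorem evaluations of $\int_M d\tau \wedge *_\varphi\varphi$ yield distinct multiples of $\int_M\lvert\tau\rvert^2_\varphi\,\mathrm{vol}_\varphi$. For other values of $\lambda$ (notably $\lambda = 1/6$ in the extremally Ricci-pinched case) the two multiples can coincide, yielding no rigidity; verifying that they differ at $\lambda = -1/8$ is the essential content of the compactness argument.
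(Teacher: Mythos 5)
There is a genuine gap in the first (and essential) step: the particular integration by parts you propose is vacuous. The $\lambda$-dependent part of the quadratic equation, namely $\tfrac{1}{7}|\tau|^2_\varphi\varphi + {*}_\varphi(\tau\wedge\tau)$, is \emph{pointwise orthogonal} to $\varphi$: since $\tau\in\Omega^2_{14}$, one has $\langle {*}_\varphi(\tau\wedge\tau),\varphi\rangle = \langle \tau\wedge\tau,{*}_\varphi\varphi\rangle = -|\tau|^2_\varphi$, which exactly cancels the $\tfrac{1}{7}|\tau|^2_\varphi\varphi$ term. Equivalently, the $\Lambda^3_1$-component of $d\tau$ equals $\tfrac{1}{7}|\tau|^2_\varphi\varphi$ for \emph{every} closed $\G_2$-structure (this is forced by the structure equations, cf.\ (\ref{eq:dtauH})), so the ``direct'' evaluation gives $\int_M d\tau\wedge{*}_\varphi\varphi = \int_M |\tau|^2_\varphi\,\mathrm{vol}_\varphi$ with no dependence on $\lambda$ whatsoever. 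The Stokes evaluation gives exactly the same thing, so the two multiples you hope to compare are both equal to $1$ for all $\lambda$ (not just $\lambda=1/6$), and the argument yields $0=0$.

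The repair is to pair $d\tau$ against $\tau\wedge\tau$ rather than against $\varphi$: from $d(\tau\wedge\tau\wedge\tau)=3\,d\tau\wedge\tau\wedge\tau$ one gets $\int_M d\tau\wedge\tau\wedge\tau=0$ on compact $M$. Substituting the $\lambda$-quadratic equation, using $\varphi\wedge\tau\wedge\tau=-|\tau|^2_\varphi\,\mathrm{vol}_\varphi$ and the pointwise algebraic identity $|\tau\wedge\tau|^2_\varphi=|\tau|^4_\varphi$ valid for $\tau\in\Omega^2_{14}$ (check it on the diagonalised form $\mu_1 e^{23}+\mu_2 e^{45}+\mu_3 e^{67}$, $\sum\mu_i=0$), this collapses to $(6\lambda-1)\int_M|\tau|^4_\varphi\,\mathrm{vol}_\varphi=0$; since $\lambda=-1/8\neq 1/6$, $\tau\equiv 0$. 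This is precisely Bryant's theorem that compact $\lambda$-quadratic structures with $\tau\not\equiv 0$ force $\lambda=1/6$, which is what the paper simply cites at this point. Your second step is fine: once $\tau\equiv 0$ the structure is torsion-free, hence Ricci-flat, and together with $\mathrm{Weyl}=0$ this gives $\mathrm{Riem}\equiv 0$ and local equivalence to $\phi$ (the paper reaches the same conclusion by observing that the structure equations (\ref{eqs:CFlatStruct}) degenerate to those of $\G_2\ltimes V$).
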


\begin{proof}
	Bryant \cite{Bry05} has shown that on a compact manifold the only possible $\lambda$-quadratic closed $\G_2$-structures with $\tau$ not identically zero have $\lambda = 1/6.$ Thus, $\tau$ is identically zero and the structure equations (\ref{eqs:CFlatStruct}) reduce to
	\begin{subequations}
		\begin{align}
		d \omega_i &= -\theta_{ij} \wedge \omega_j, \\
		d \theta_{ij} &= - \theta_{ik} \wedge \theta_{kj},
		\end{align}
	\end{subequations}
	the structure equations of the Lie group $\G_2 \ltimes V.$ It follows that $\varphi$ is locally equivalent to $\phi.$
\end{proof}

\begin{remark}
	If $\varphi$ is a closed $\G_2$-structure, then a $\G_2$-structure of the form $e^f \varphi$ is not closed unless $f$ is a constant. In particular, a closed $\G_2$-structure inducing a conformally flat metric is not generally conformally torsion-free or flat itself.
\end{remark}

\subsection{Examples}\label{ssect:Egs}

This section contains three examples of closed $\G_2$-structures with conformally flat induced metric, on the manifolds $\R^7, \R^7 \setminus \lbrace 0 \rbrace,$ and $\Lambda^2_+ \mathbb{CP}^2.$

\subsubsection{The flat $\G_2$-structure on $\R^7$}\label{sssect:EgFlat} The first example is unremarkable. The standard 3-form $\phi$ on $V = \R^7$ is closed, and the induced metric $g_\phi$ is just the standard flat metric on $\R^7,$ hence a fortiori conformally flat.

It follows from the proof of Proposition \ref{prop:CpctCFlat} that any torsion-free $\G_2$-structure $\varphi$ with $g_{\varphi}$ conformally flat is equivalent to the standard $\G_2$-structure on $\R^7.$

\subsubsection{A non-flat example on $\R^7 \setminus \lbrace 0 \rbrace$}\label{sssect:EgNonFlatR7} Let $\mathrm{ASO}(4)$ denote the special Euclidean group in four dimensions, i.e. the group $\SO(4) \ltimes \R^4$ of rigid motions of $\R^4.$ The group $\mathrm{ASO}(4)$ can be regarded as the matrix group consisting of the $5 \times 5$ matrices of the form
\begin{equation}
g = \left( \begin{array}{cc}
1 & 0 \\
v & A
\end{array} \right),
\end{equation}
where $v \in \R^4$ is a column vector and $A \in \SO(4).$ Write the canonical left-invariant Maurer-Cartan form on $\ASO(4)$ as
\begin{equation}
\mu = \left( \begin{array}{ccccc}
0 & 0 & 0 & 0 & 0 \\
\eta_1 & 0 & \xi_1 + \rho_1 & \xi_2 + \alpha_2 & \xi_3 + \alpha_1 \\
\eta_2 & -\xi_1-\rho_1 & 0 & -\xi_3 + \alpha_1 & \xi_2 - \alpha_2 \\
\eta_3 & -\xi_2 - \alpha_2 & \xi_3 - \alpha_1 & 0 & -\xi_1 + \rho_1 \\
\eta_4 & -\xi_3-\alpha_1 & -\xi_2 + \alpha_2 & \xi_1-\rho_1 & 0
\end{array} \right).
\end{equation}
The subgroup generated by the dual vectors to $\xi_1, \xi_2, \xi_3$ and $\rho_1$ is $\mathrm{U}(2) \subset \SO(4).$ 

On the manifold $\R_+ \times \ASO(4),$ with coordinate $t$ in the $\R_+$-direction, define a $\ASO(4)$-invariant 3-form $\tilde{\varphi}$ by
\begin{equation}
\begin{aligned}
\tilde{\varphi} =& \tfrac{4}{9} t^2 d t \wedge \alpha_1 \wedge \alpha_2 + \tfrac{256}{t^4} d t \wedge \left( \eta_1 \wedge \eta_2 + \eta_3 \wedge \eta_4 \right) \\
&+ \tfrac{512}{3t^3} \left( \alpha_1 \wedge \left( \eta_1 \wedge \eta_3 - \eta_2 \wedge \eta_4 \right) + \alpha_2 \wedge \left( -\eta_1 \wedge \eta_4 - \eta_2 \wedge \eta_3 \right) \right).
\end{aligned}
\end{equation}
The Maurer-Cartan equation $d \mu = -\mu \wedge \mu$ implies that $\tilde{\varphi}$ satisfies $d \tilde{\varphi} = 0.$ In fact, $\tilde{\varphi}$ is the pullback to $\R_+ \times \ASO(4)$ of a closed $\G_2$-structure $\varphi$ on the space $\R_+ \times \ASO(4)/\mathrm{U}(2).$ Letting $\pi : \ASO(4) \to \ASO(4)/\mathrm{U}(2)$ denote the coset projection, the metric $g_{\varphi}$ satisfies
\begin{equation}
\pi^* g_{\varphi} = dt^2 + \tfrac{4}{9} t^2 \left( \alpha_1^2 + \alpha_2^2 \right) + \tfrac{256}{t^4} \left( \eta_1^2 + \eta_2^2 + \eta_3^2 + \eta_4^2 \right).
\end{equation} 
Computation using the Maurer-Cartan equation shows that $g_{\varphi}$ is conformally flat. In fact, the metric $t^4 g_{\varphi}$ is isometric to $\R^7$ with the flat metric.

The torsion 2-form $\tau$ of $\varphi$ satisfies
\begin{equation}
\pi^* \tau = \tfrac{32}{9} t \alpha_1 \wedge \alpha_2 - \tfrac{1024}{t^5} \left( \eta_1 \wedge \eta_2 + \eta_3 \wedge \eta_4 \right),
\end{equation}
and the scalar curvature of $g_{\varphi}$ may be computed from the formula (\ref{eq:RiemCurvDecompScal}) to be
\begin{equation}
\mathrm{Scal}(g_{\varphi}) = -\frac{48}{t^2}.
\end{equation}
It follows that the metric $g_{\varphi}$ is incomplete.

The space $\ASO(4)/\mathrm{U}(2)$ is the unit sphere bundle of the bundle of self-dual 2-forms on $\R^4,$ so it is diffeomorphic to $\R^4 \times S^2.$ The integral manifolds of the distribution defined by $\eta_1 = \eta_2 = \eta_3 = \eta_4 = 0$ are the submanifolds of the form $\left\lbrace p \right\rbrace \times S^2$ for $p \in \R^4,$ while the integral manifolds of the distribution $\alpha_1 = \alpha_2 = 0$ are the submanifolds of the form $\R^4 \times \left\lbrace q \right\rbrace$ for $q \in S^2.$ Thus, the manifold $\R_+ \times \ASO(4)/\mathrm{U}(2)$ is diffeomorphic to $\R^7 \setminus \left\lbrace 0 \right\rbrace.$

\subsubsection{An example on $\Lambda^2_+ \mathbb{CP}^2$}\label{sssect:Lam2CP2}

Write the left-invariant Maurer-Cartan form of $\SU(3)$ as
\begin{equation}
\mu = \left( \begin{array}{ccc}
\tfrac{2}{3} i \xi & \eta_4 + i \eta_5 & -\eta_7 - i \eta_6 \\
-\eta_4 + i \eta_5 & i \zeta - \tfrac{1}{3} i \xi & \eta_2 + i \eta_3 \\
\eta_7 - i \eta_6 & -\eta_2 + i \eta_3 & - i \zeta - \tfrac{1}{3} i \xi
\end{array} \right).
\end{equation}
The subgroup generated by the dual vectors to $\xi$ and $\zeta$ is a maximal torus $\mathrm{T}^2 \subset \SU(3),$ while the subgroup generated by the vectors dual to $\xi, \zeta, \eta_2$ and $\eta_3,$ is isomorphic to $\mathrm{U}(2)$ and will be denoted  $\mathrm{U}(2)_1.$

On the manifold $\R_{+} \times \SU(3),$ with coordinate $r$ in the $\R^{+}$-direction, define 1-forms $\alpha_1, \ldots, \alpha_7$ by
\begin{equation}
\begin{aligned}
\alpha_1 &= \frac{2 \: d r}{\cosh(r)^{\frac{2}{3}}} , & \left( \begin{array}{c}
\alpha_2 \\
\alpha_3
\end{array} \right) &= 4 \sinh(r) \cosh(r)^{\frac{1}{3}} \left( \begin{array}{c}
\eta_2 \\
\eta_3
\end{array}\right), \\
\left( \begin{array}{c}
\alpha_4 \\
\alpha_5
\end{array} \right) & = \frac{2+e^{2r}}{\cosh(r)^{\frac{2}{3}}} \left( \begin{array}{c}
\eta_4 \\
\eta_5
\end{array}\right), & \left( \begin{array}{c}
\alpha_6 \\
\alpha_7
\end{array} \right) & = \frac{2+e^{-2r}}{\cosh(r)^{\frac{2}{3}}} \left( \begin{array}{c}
\eta_6 \\
\eta_7
\end{array}\right),
\end{aligned}
\end{equation}
and a 3-form $\tilde{\varphi}$ by
\begin{equation}
\tilde{\varphi} =  \alpha_{123} + \alpha_{145} + \alpha_{167} + \alpha_{246} - \alpha_{257} - \alpha_{347} - \alpha_{356},
\end{equation}
where $\alpha_{ijk}$ stands for the wedge product $\alpha_i \wedge \alpha_j \wedge \alpha_k.$ The Maurer-Cartan equation $d \mu = - \mu \wedge \mu$ implies that $\tilde{\varphi}$ satisfies $d \tilde{\varphi} = 0.$ In fact, $\tilde{\varphi}$ is the pullback to $\R_+ \times \SU(3)$ of a closed $\G_2$-structure $\varphi$ on the space $\R_{+} \times \SU(3)/\mathrm{T}^2.$ The $\G_2$-structure $\varphi$ is cohomogeneity-one under the action of $\SU(3).$ It is not hard to check that $\varphi$ satisfies the conditions \cite{CleSwa} required to extend $\SU(3)$-equivalently over $\left\lbrace r = 0 \right\rbrace$ and defines a closed $\G_2$-structure on the manifold $\Lambda^2_+ \mathbb{CP}^2.$ The singular orbit $\mathbb{CP}^2 = \SU(3)/\mathrm{U}(2)_1$ of the $\SU(3)$-action on $\Lambda^2_+ \mathbb{CP}^2$ is exactly the locus $\left\lbrace r = 0 \right\rbrace.$

Letting $\pi : \SU(3) \to \SU(3)/\mathrm{T}^2$ denote the coset projection, the metric $g_{\varphi}$ satisfies
\begin{equation}
\pi^* g_{\varphi} = \alpha_1^2 + \alpha_2^2 + \alpha_3^2 + \alpha_4^2 + \alpha_5^2 + \alpha_6^2 + \alpha_7^2.
\end{equation}
Computation using the Maurer-Cartan equation shows that $g_{\varphi}$ is conformally flat. In fact, the metric
\begin{equation}
h = \frac{3 \cosh(r)^{\frac{4}{3}}}{\left( 4 \cosh(r)^2 -1 \right)^2} g_{\varphi}
\end{equation}
is isometric to the round metric on the 7-sphere. It is known \cite{Miy06} that $S^7 \setminus \mathbb{CP}^2 \cong \Lambda^2_{+} \mathbb{CP}^2,$ and the metric $h$ is simply the restriction of the round metric on $S^7$ to this subset.

The torsion 2-form $\tau$ of $\varphi$ satisfies
\begin{equation}
\tau = \frac{4 \sinh(r)}{3 \cosh(r)^{\frac{1}{3}}} \alpha_{23} + \frac{6 \cosh(r) - 2 \sinh(r)}{3 \cosh(r)^{\frac{1}{3}}} \alpha_{45} - \frac{6 \cosh(r) + 2 \sinh(r)}{3 \cosh(r)^{\frac{1}{3}}} \alpha_{67},
\end{equation}
and the scalar curvature of $g_{\varphi}$ may be computed using formula (\ref{eq:RiemCurvDecompScal}) to be
\begin{equation}
\mathrm{Scal}(g_{\varphi}) = \frac{4 - 16 \cosh(r)^2}{3 \cosh(r)^{\frac{2}{3}}} .
\end{equation}
Note that the scalar curvature blows up as $r \to \infty,$ and this is at finite distance, since
\begin{equation}
\int_{r=0}^{r=\infty} \alpha_1 = \int_{r=0}^{r=\infty} \frac{4 \: d r}{\cosh(r)^{\frac{2}{3}}} < \infty.
\end{equation}
It follows that the metric $g_{\varphi}$ is incomplete.

\subsection{The classification theorem}\label{ssect:ConfClass}

This section contains the statement and proof of the classification of closed $\G_2$-structures with conformally flat induced metric.

The following theorem is a consequence of \'Elie Cartan's generalisation \cite{Cart04} of Lie's Third Fundamental Theorem. A modern exposition of this and related theorems concerning prescribed coframing problems has been given by Bryant \cite{BryEDSNotes}.

\begin{thm}\label{thm:CFlatExist}
	For any $T_0 \in \mathfrak{g}_2,$ there exists a closed $\G_2$-structure $\varphi$ with conformally flat metric $g_{\varphi}$ on a neighbourhood $U$ of $0 \in \R^7$ whose $\G_2$-coframe bundle $\pi : \mathcal{B} \to U$ contains a $u_0 \in \mathcal{B}_0 = \pi^{-1} \left( 0 \right)$ for which $T \left( u_0 \right) = T_0.$ Any two real analytic closed $\G_2$-structures with conformally flat metric that satisfy this property are isomorphic on a neighbourhood of $0 \in \R^7.$ Furthermore, any closed $\G_2$-structure with conformally flat metric that is $C^2$ is real-analytic.
\end{thm}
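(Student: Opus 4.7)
The plan is to apply \'Elie Cartan's generalization of Lie's Third Fundamental Theorem, in the formulation presented in \cite{BryEDSNotes}, to the prescribed coframing problem defined by the structure equations (\ref{eqs:CFlatStruct}). These equations specify $d\omega$, $d\theta$, and $dT$ entirely in terms of the coframing $(\omega_i,\theta_{ij})$ on the would-be $\G_2$-coframe bundle and the $\mathfrak{g}_2$-valued auxiliary function $T$, with polynomial coefficients in $T$. The hypotheses of Cartan's theorem demand that the three identities $d^2\omega_i=0$, $d^2 T_{jk}=0$, and $d^2\theta_{ij}=0$ hold as formal consequences of (\ref{eqs:CFlatStruct}). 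Once these compatibility identities are established, the theorem produces, for each $T_0\in\mathfrak{g}_2$, a real-analytic 21-dimensional manifold $\tilde{\mathcal{B}}$ carrying the prescribed coframing with $T(u_0)=T_0$ at a chosen basepoint, unique up to isomorphism in the real-analytic category.

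I expect the compatibility check to be the main obstacle. The identity $d^2\omega_i=0$ is essentially built into the derivation of (\ref{eqs:CFlatStruct}): it is the first Bianchi identity for a general closed $\G_2$-structure with the substitutions $C=0$ and $H$ given by (\ref{eq:CFlatH}) imposed, and the relations (\ref{eq:BianchI}) guarantee that the remaining coefficients collapse to what (\ref{eq:CFlatCartII}) prescribes. The non-trivial work is the verification of $d^2 T_{jk}=0$ and $d^2\theta_{ij}=0$: differentiating (\ref{eq:CFlatdT}) and (\ref{eq:CFlatCartII}) and substituting from (\ref{eqs:CFlatStruct}) to eliminate all $d\omega$, $d\theta$, and $dT$, one obtains $3$-form and $4$-form expressions whose vanishing reduces to a family of polynomial identities in $T\in\mathfrak{g}_2$. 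These identities hold as a consequence of the conformal flatness ansatz together with the general identities of Section \ref{sect:G2Struct}, but their explicit verification is a lengthy computation in the $\varepsilon$-calculus. The three examples of \S\ref{ssect:Egs} serve as a consistency check, since their very existence implies that the system cannot be overdetermined.

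From the coframing produced by Cartan's theorem, one recovers a closed $\G_2$-structure on a 7-dimensional quotient: the distribution $\{\omega_i=0\}$ is integrable by (\ref{eq:CFlatCartII}), and its leaves carry the Maurer-Cartan form of $\G_2$, so the leaf space is locally a 7-manifold $U$ on which the $\G_2$-invariant 3-form $\tfrac{1}{6}\varepsilon_{ijk}\,\omega_i\wedge\omega_j\wedge\omega_k$ descends to a closed $\G_2$-structure $\varphi$; the formulas (\ref{eqs:riemcurv}) combined with (\ref{eqs:HCSConf}) then guarantee that the Weyl tensor of $g_\varphi$ vanishes. For the final regularity assertion, note that if $\varphi$ is $C^2$ and conformally flat then $T$ is $C^1$, and the structure equation (\ref{eq:CFlatdT}) expresses the directional derivatives of $T$ (in both the semibasic and vertical directions of $\mathcal{B}$) as polynomial functions of $T$ alone. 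Thus $T$ satisfies a determined first-order system with real-analytic coefficients, so $T$, and hence the coframing on $\mathcal{B}$ and the $\G_2$-structure $\varphi$ itself, must be real-analytic by standard real-analytic dependence theorems. Applying the uniqueness clause of Cartan's theorem to two real-analytic solutions with matching $T_0$ then completes the proof.
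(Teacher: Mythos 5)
Your proposal follows the paper's proof essentially step for step: both invoke Cartan's generalisation of Lie's Third Fundamental Theorem (Theorem 2 of \cite{BryEDSNotes}) applied to the closed system (\ref{eqs:CFlatStruct}), descend to the leaf space of $\left\lbrace \omega = 0 \right\rbrace$ via the Frobenius theorem to recover the closed $\G_2$-structure $\varphi$ and identify the abstract coframed manifold with an open subset of $\mathcal{B}$, and obtain the final regularity assertion by playing the uniqueness statement (valid for $C^2$ structures, by ODE methods) against the real-analytic model furnished by existence. The only quibbles are minor: integrability of $\left\lbrace \omega_i = 0 \right\rbrace$ follows from (\ref{eq:CFlatCartI}) rather than (\ref{eq:CFlatCartII}), and the compatibility identities $d^2 = 0$ are not an outstanding obstacle in the paper's logic, since they are recorded as identities immediately after (\ref{eqs:CFlatStruct}).
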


\begin{proof}
	This is an application of Theorem 2 of \cite{BryEDSNotes}. Since the exterior derivatives of equations (\ref{eqs:CFlatStruct}) are identities, the hypotheses of Cartan's Theorem are satisfied. Thus, for any $T_0 \in \mathfrak{g}_2$ there exists a real-analytic manifold $N$ of dimension 21 on which there are two real analytic 1-forms $\omega$ and $\theta$ with values in $V$ and $\mathfrak{g}_2$ respectively, and a real-analytic function $T : N \to \mathfrak{g}_2$ such that $\left( \omega, \theta \right)$ is a real analytic coframing on N, equations (\ref{eq:CFlatCartI}), (\ref{eq:CFlatdT}), and (\ref{eq:CFlatCartII}) are satisfied on $N$, and there exists a $u_0 \in N$ for which $T \left( u_0 \right) = T_0.$
	
	By (\ref{eq:CFlatCartI}), the equation $\omega = 0$ defines an integrable plane field of codimension 7 on $N$. After shrinking $N$ to an open neighbourhood of $u_0$ if necessary, an application of the Frobenius Theorem shows there is a submersion $y : N \to \R^7$ with $y \left( u_0 \right) = 0$ so that the leaves of this plane field are the fibres of $y$ and, moreover, that $d y = p \omega$ for some function $p : N \to GL(7,\R)$ that satisfies $p \left( u_0 \right) = I_7.$
	
	By (\ref{eq:CFlatCartI}), the 3-form
	\begin{align}\label{eq:CFlatUpsphi}
	\upvarphi = \tfrac{1}{6} \eps_{ijk} \omega_i \wedge \omega_j \wedge \omega_k
	\end{align}
	is closed. Since $\upvarphi$ is $y$-semibasic, and since, by definition the fibres of $y$ are connected, it follows that $\upvarphi$ is the pullback to $N$ of a closed $\G_2$-structure on the open set $U = y \left( N \right).$ Denote this $\G_2$-structure by $\varphi$.
	
	Let $\pi : \mathcal{B} \to U$ be the $\G_2$-coframe bundle associated to $\varphi.$ Define a mapping $ \sigma : N \to \mathcal{B}$ as follows: If $y \left( u \right) = x \in U,$ then $dy_u : T_u N \to T_x \R^7 \cong \R^7$ is surjective, and by construction, has the same kernel as $\omega_u : T_u N \to \R^7.$ Thus, there is a unique linear isomorphism $\sigma \left( u \right) : T_x \R^7 \to \R^7$ so that $\omega_u = \sigma \left( u \right) \circ d y_u.$ Using the standard identification $T_x \R^7 \cong \R^7,$ $\sigma \left( u \right)$ is $y \left( u \right)^{-1} \in \mathrm{GL}(7,\R).$ That $\sigma \left( u \right)$ is a $\G_2$-coframe follows from the form (\ref{eq:CFlatUpsphi}) of $\upvarphi.$ Since $\left( \omega, \theta \right)$ is a coframe, it follows that $\sigma : N \to \mathcal{B}$ is an open immersion. After possibly shrinking $N$ again, $N$ may be identified with an open subset of $\mathcal{B}.$
	
	The structure equations (\ref{eq:CFlatCartI}), (\ref{eq:CFlatdT}), and (\ref{eq:CFlatCartII}) now become identified with the structure equations of $\mathcal{B},$ implying the $\G_2$-structure $\varphi$ is closed with conformally flat metric and that the torsion function $T$ takes the value $T_0$ at $u_0 \in \mathcal{B}.$ This proves the existence statement.
	
	Uniqueness in the real-analytic category follows immediately from Theorem 2 of \cite{BryEDSNotes}. In fact, the theorem can be proved using only ODE techniques, and uniqueness is given as long as $\varphi$ is sufficiently differentiable for $\mathcal{B}$ to exist as a differentiable bundle and $T$ to exist and be differentiable. It suffices that $\varphi$ be $C^2.$ The existence theorem provides a real analytic example, so uniqueness then proves $\varphi$ is real-analytic.
\end{proof}

The function $T: \mathcal{B} \to \mathfrak{g}_2$ is $\G_2$-equivariant, so the map $\left\lbrack T \right\rbrack$ that sends a point $p \in M$ to the adjoint orbit $\G_2 \cdot T(u),$ where $u$ is any element of the fibre $\pi^{-1} (p),$ is well-defined. From the discussion in \S\ref{sssect:V0qreps}, every element of $\mathfrak{g}_2$ is conjugate to a unique element of the form
\begin{equation}\label{eq:adgG2elt}
\mu_1 E_{23} + \mu_2 E_{45} - \left( \mu_1 + \mu_2 \right) E_{67}, \:\:\: 0 \leq \mu_1 \leq \mu_2,
\end{equation}
where $E_{ij}$ denotes the matrix with $(i,j)$ entry equal to $1,$ $(j,i)$ entry equal to $-1,$ and all other entries equal to $0.$ Let $A$ denote the set of elements of the form (\ref{eq:adgG2elt}). The function $\left\lbrack T \right\rbrack$ may be then thought of as a map $M \to A.$ It follows from Theorem \ref{thm:CFlatExist} that two germs of closed $\G_2$-structures with conformally flat induced metric at $p$ and $q$  are isomorphic if and only if $\left\lbrack T \right\rbrack (p) = \left\lbrack T \right\rbrack (q).$

\begin{cor}
	The set of isomorphism classes of germs of closed $\G_2$-structures with conformally flat induced metric is in bijection with $A.$
\end{cor}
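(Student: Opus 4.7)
The plan is to unpack Theorem \ref{thm:CFlatExist} as a statement about germs and their invariant $[T]$; the corollary is essentially a bookkeeping consequence. First I would verify that $[T]$ is well-defined on isomorphism classes of germs: $\G_2$-equivariance of $T : \mathcal{B} \to \mathfrak{g}_2$ shows that the adjoint orbit $\G_2 \cdot T(u)$ is independent of the coframe $u \in \pi^{-1}(p)$, and the classification of adjoint orbits in \S\ref{sssect:V0qreps} shows this orbit meets $A$ in exactly one point. Moreover, an isomorphism of closed $\G_2$-structures induces a $\G_2$-equivariant bundle isomorphism intertwining the torsion functions, so $[T](p)$ depends only on the isomorphism class of the germ of $(M, \varphi)$ at $p$.

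For surjectivity, given $a \in A \subset \mathfrak{g}_2$, I would apply the existence clause of Theorem \ref{thm:CFlatExist} with $T_0 = a$ to produce a closed $\G_2$-structure with conformally flat metric on a neighbourhood of $0 \in \R^7$ whose torsion takes the value $a$ at some coframe $u_0$ over $0$; the germ of this structure at $0$ then maps to $a$ under $[T]$.

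For injectivity, suppose $(M_i, \varphi_i, p_i)$ are two germs with $[T](p_1) = [T](p_2) = a$. Since each $\varphi_i$ is at least $C^2$, the regularity clause of Theorem \ref{thm:CFlatExist} upgrades it to real-analytic. I would then pick charts centred at each $p_i$ to identify a neighbourhood of $p_i$ with a neighbourhood of $0 \in \R^7$, and use the $\G_2$-action on the fibre to choose coframes $u_i$ over $0$ with $T(u_i) = a \in \mathfrak{g}_2$. The uniqueness clause of Theorem \ref{thm:CFlatExist} then furnishes a local isomorphism between the two $\G_2$-structures around $0$, which pulls back to an isomorphism between the original germs at $p_1$ and $p_2$.

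The only mildly non-routine step is the normalisation reducing an arbitrary germ to the standard base point $0 \in \R^7$ with a prescribed value of $T$ at a prescribed coframe in the fibre; once this is in place, Theorem \ref{thm:CFlatExist} does all of the work and no further analysis is required.
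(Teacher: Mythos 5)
Your proposal is correct and follows essentially the same route as the paper, which deduces the corollary from Theorem \ref{thm:CFlatExist} together with the observation that $[T]$ is well-defined via $\G_2$-equivariance and the normal form (\ref{eq:adgG2elt}) for adjoint orbits. The normalisation step you flag (moving to a coframe over $0$ with $T(u_0)=a$ by acting with $\G_2$ in the fibre) is exactly the implicit content of the paper's remark that two germs are isomorphic if and only if $[T](p)=[T](q)$.
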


\begin{thm}\label{thm:Class}
	A closed $\G_2$-structure $\varphi$ with (locally) conformally flat induced metric $g_{\varphi}$ is, up to constant rescaling, locally equivalent to one of the three examples $\R^7, \R^7 \setminus \lbrace 0 \rbrace,$ and $\Lambda^2_+ \mathbb{CP}^2$ presented in \S\ref{ssect:Egs}.
\end{thm}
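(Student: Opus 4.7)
The plan is to combine the corollary preceding the theorem with a direct inspection of the torsion of the three examples. That corollary identifies the set of isomorphism classes of germs of closed $\G_2$-structures with conformally flat metric with the parameter space $A = \lbrace \mu_1 E_{23} + \mu_2 E_{45} - (\mu_1+\mu_2) E_{67} \mid 0 \leq \mu_1 \leq \mu_2 \rbrace$ via the invariant $[T]$. Hence to prove Theorem \ref{thm:Class}, it suffices to show that every element of $A$ arises, after a constant rescaling of $\varphi$, as the value of $[T]$ at some point of one of the three examples of \S\ref{ssect:Egs}.

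First I would work out how the rescaling $\varphi \mapsto c^3 \varphi$ acts on $A$. Since the induced metric then rescales by $c^2$ (with Levi-Civita connection unchanged) while tautological coframes on the $\G_2$-coframe bundle rescale by $c$, the relation $\gamma_i = T_{ij} \omega_j$ forces $T \mapsto c^{-1} T$. Thus rescaling acts by the dilation $(\mu_1, \mu_2) \mapsto (c^{-1} \mu_1, c^{-1} \mu_2)$ on $A$, and the quotient of $A$ by this action is parameterised by the origin together with the projective ratio $\mu_1/\mu_2 \in [0, 1]$.

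Next I would determine the image of $[T]$ for each of the three examples by inspecting the torsion formulas provided. The flat example on $\R^7$ has $T \equiv 0$ and so contributes the origin of $A$. For the example of \S\ref{sssect:EgNonFlatR7}, rewriting the stated expression for $\tau$ in the orthonormal coframe read off from the formula for $\pi^{*} g_{\varphi}$ yields an eigenvalue triple of the form $(\nu,-\nu/2,-\nu/2)$ for a nonzero function $\nu$ of $t$ alone; this collapses to the edge $\mu_1 = \mu_2$ of $A$. For the example of \S\ref{sssect:Lam2CP2}, the given formula for $\tau$ is already written in an orthonormal coframe, so the eigenvalue triple may be read off directly and placed into canonical form; a short calculation shows the ratio $\mu_1/\mu_2$ is $0$ at $r=0$ and tends to $1$ as $r \to \infty$, and by continuity it sweeps out $[0, 1)$ as $r$ ranges over $[0, \infty)$. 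Combining, the three images cover $A$ modulo rescaling: the origin, the half-open arc $[0,1)$, and the limit endpoint $1$ (attained by the $\R^7 \setminus \lbrace 0 \rbrace$ example).

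The main obstacle is the explicit eigenvalue computation for the two non-flat examples, together with the verification that the limiting ratio for the $\Lambda^2_+ \mathbb{CP}^2$ example really is $1$ but is never attained on the manifold — so that example 2 is genuinely required to complete the picture rather than being redundant. Once these calculations are in hand, the corollary immediately delivers the local equivalence between any given germ of a closed $\G_2$-structure with conformally flat metric and an appropriately rescaled germ of one of the three examples, which is the statement of Theorem \ref{thm:Class}.
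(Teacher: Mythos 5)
Your proposal is correct and follows essentially the same route as the paper: invoke Theorem \ref{thm:CFlatExist} (via the corollary identifying germs with points of $A$), read off the torsion of each example from the stated formulas for $\tau$, and check that $\R_+ \cdot [T](N_1 \cup N_2 \cup N_3) = A$. Your explicit description of the rescaling action and of the quotient of $A$ by dilation as the origin together with the ratio $\mu_1/\mu_2 \in [0,1]$ is just a slightly more detailed unpacking of the paper's final displayed equation.
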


\begin{proof}
	Let $N_1 = \R^7,$ $N_2 = \R^7 \setminus \lbrace 0 \rbrace$, and $N_3 = \Lambda^2_+ \mathbb{CP}^2$ be equipped with the $\G_2$-structures described in \S\ref{ssect:Egs}. From the discussion after Theorem \ref{thm:CFlatExist}, it is enough to show that for every element $X \in A,$ there is some constant $\lambda \in \R_+$ so that $\lambda X$ is in the image of $\left\lbrack T \right\rbrack$ applied to one of the $N_i.$
	
	For example \ref{sssect:EgFlat}, the torsion function vanishes identically, so
	\begin{equation}
	\left\lbrack T \right\rbrack \left( N_1 \right) = \left\lbrace 0 \right\rbrace.
	\end{equation}
	
	For example \ref{sssect:EgNonFlatR7}, a $\G_2$-coframe $\left( \omega_1, \ldots, \omega_7\right)$ is furnished by
	\begin{equation}
	\omega_1 = dt, \:\: \omega_{1+i} = \tfrac{2}{3} t \alpha_i, i=1,2 \:\:\:, \omega_{3+j} = \tfrac{16}{t^2} \theta_j, j=1, \ldots, 4,
	\end{equation}
	and the torsion 2-form $\tau$ is given in terms of this coframe by
	\begin{equation}
	\tau = \tfrac{4}{t} \left( 2 \omega_2 \wedge \omega_3 - \omega_4 \wedge \omega_5 - \omega_6 \wedge \omega_7 \right).
	\end{equation}
	Thus,
	\begin{equation}
	\left\lbrack T \right\rbrack \left( N_2 \right) = \left\lbrace  x E_{23} + x E_{45} - 2 x E_{67} \mid x \in \R_{+} \right\rbrace
	\end{equation}
	
	For example \ref{sssect:Lam2CP2}, the coframe $\alpha_1, \ldots, \alpha_7$ is a $\G_2$-coframe, and the torsion 2-form $\tau$ is given in terms of this coframe by
	\begin{equation}
	\tau = \frac{4 \sinh(r)}{3 \cosh(r)^{\frac{1}{3}}} \alpha_{23} + \frac{6 \cosh(r) - 2 \sinh(r)}{3 \cosh(r)^{\frac{1}{3}}} \alpha_{45} - \frac{6 \cosh(r) + 2 \sinh(r)}{3 \cosh(r)^{\frac{1}{3}}} \alpha_{67},
	\end{equation}
	Thus,
	\begin{equation}
	\left\lbrack T \right\rbrack \left( N_3 \right) = \left\lbrace  x E_{23} + y E_{45} -\left(x+ y \right) E_{67} \vline\ \: \begin{aligned}
	& x, y \in \R_+, \\
	& \!\!\! \left(x-y\right) \left( x+2y \right) \left(2 x + y \right) = -16
	\end{aligned}  \right\rbrace.
	\end{equation}
	
	It follows that
	\begin{equation}
	\R_+ \cdot \left\lbrack T \right\rbrack \left( N_1 \cup N_2 \cup N_3 \right) = A,
	\end{equation}
	completing the proof.
\end{proof}

Although the statement of Theorem \ref{thm:Class} is local, the strong rigidity imposed by the result makes it easy to give global consequences.

\begin{cor}\label{cor:CompleteCFlt}
	Let $(M, \varphi)$ be a 7-manifold endowed with a closed $\G_2$-structure such that $g_\varphi$ is conformally flat and complete. Then $\varphi$ is locally equivalent to the flat $\G_2$-structure $\phi$ on $V = \R^7$ and $M$ is the quotient of $V$ by a discrete group of $\G_2$-automorphisms.
\end{cor}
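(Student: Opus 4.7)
The plan is to combine Theorem \ref{thm:Class} with completeness to rule out the two non-flat local models of \S\ref{ssect:Egs}, and then invoke the classification of complete flat Riemannian manifolds to identify $M$ globally.

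The first and main step is to show that $\tau \equiv 0$ on $M$. I would argue by contradiction: if $\tau(p) \neq 0$ at some $p \in M$, then by Theorem \ref{thm:Class} a neighborhood of $p$ is $\G_2$-equivalent, up to constant rescaling, to an open subset of either the $\R^7 \setminus \lbrace 0 \rbrace$ example of \S\ref{sssect:EgNonFlatR7} or the $\Lambda^2_+ \mathbb{CP}^2$ example of \S\ref{sssect:Lam2CP2} (the flat example has vanishing torsion). In both models, the computations of \S\ref{ssect:Egs} exhibit a distinguished geodesic direction — the $\partial_t$ direction in the first case and the direction dual to $\alpha_1$ in the second — along which $\mathrm{Scal}(g_\varphi) \to -\infty$ while the arclength remains bounded. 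Transferring this geodesic back to $M$ via the local $\G_2$-isomorphism, which is in particular a Riemannian local isometry (hence sends geodesics to geodesics and preserves $\mathrm{Scal}$), I obtain a unit-speed geodesic $\gamma \colon [0, T_0) \to M$ starting at $p$ along which $\mathrm{Scal}(g_\varphi)(\gamma(t)) \to -\infty$ as $t \to T_0^-$. Completeness of $g_\varphi$ extends $\gamma$ to a well-defined point $\gamma(T_0) \in M$, but continuity of $\mathrm{Scal}(g_\varphi)$ at $\gamma(T_0)$ contradicts the blow-up.

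Once $\tau \equiv 0$ on $M$, the structure equations (\ref{eqs:CFlatStruct}) collapse to those of the Lie group $\G_2 \ltimes V$, exactly as in the proof of Proposition \ref{prop:CpctCFlat}; thus $g_\varphi$ is flat and $\varphi$ is locally $\G_2$-equivalent to $\phi$. Passing to the universal cover, the Killing--Hopf theorem identifies $(\tilde M, \tilde g_\varphi)$ isometrically with $(V, g_\phi)$. The lift of $\varphi$ to $\tilde M$ is a closed $\G_2$-structure inducing this flat metric, and the uniqueness clause of Theorem \ref{thm:CFlatExist} applied with $T_0 = 0$ identifies this lift with $\phi$ itself. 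The deck transformation group $\Gamma$ of $\tilde M \to M$ then acts by isometries of $g_\phi$ preserving $\phi$, hence by elements of the $\G_2$-automorphism group $\G_2 \ltimes V$, yielding $M = V/\Gamma$.

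The principal obstacle is the scalar-curvature blow-up step: one must verify both that each of the two non-flat model metrics really does have scalar curvature diverging at finite arclength along a specific geodesic (a direct computation using the explicit expressions for $\mathrm{Scal}(g_\varphi)$ in \S\ref{sssect:EgNonFlatR7} and \S\ref{sssect:Lam2CP2}, together with the arclength integrals recorded there) and that the merely \emph{local} equivalence provided by Theorem \ref{thm:Class} suffices to transport this divergence to an intrinsic geodesic in $M$. The latter is automatic because the equivalence is in particular a Riemannian local isometry, so both "geodesic" and "scalar curvature" transfer; once this is noted, completeness forces the contradiction.
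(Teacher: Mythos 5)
Your proposal is correct and follows exactly the argument the paper intends (the paper states the corollary without proof, relying on the observation in \S\ref{ssect:Egs} that both non-flat models have scalar curvature $-\tfrac{1}{2}\lvert\tau\rvert_{\varphi}^2$ diverging at finite arclength): rule out the non-flat local models by completeness, then use flatness and Killing--Hopf plus parallelism of $\varphi$ to get $M = V/\Gamma$. The only point to tighten is that a single local equivalence near $p$ need not contain the entire blow-up geodesic, so one must continue the identification along $\gamma$ (using that $\tau \neq 0$ persists because $\lvert\tau\rvert^2$ agrees with the model value at each stage); this is routine but not quite ``automatic'' as stated.
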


\begin{cor}\label{cor:H7noG2}
	There is no closed $\G_2$-structure inducing a metric of constant negative sectional curvature.
\end{cor}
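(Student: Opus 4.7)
The plan is to reduce Corollary \ref{cor:H7noG2} to the classification Theorem \ref{thm:Class} by way of the elementary observation that in dimension at least $4$, any metric of constant sectional curvature has vanishing Weyl tensor and is therefore locally conformally flat. Suppose for contradiction that $\varphi$ is a closed $\G_2$-structure whose induced metric $g_\varphi$ has constant negative sectional curvature $K < 0$. Then $g_\varphi$ is locally conformally flat, so by Theorem \ref{thm:Class}, $\varphi$ is locally equivalent, up to constant rescaling, to one of the three examples presented in \S\ref{ssect:Egs}. It therefore suffices to verify that none of these examples can carry a metric of constant negative sectional curvature.

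In dimension $7$, constant sectional curvature $K$ forces constant scalar curvature $42K$, so in the present case $\mathrm{Scal}(g_\varphi)$ must be a strictly negative constant. The first example is the flat $\G_2$-structure on $\R^7$ and has $\mathrm{Scal}(g_\varphi) = 0$. The non-flat example of \S\ref{sssect:EgNonFlatR7} has scalar curvature $-48/t^2$, and the example of \S\ref{sssect:Lam2CP2} on $\Lambda^2_+ \mathbb{CP}^2$ has scalar curvature $(4 - 16\cosh(r)^2)/(3\cosh(r)^{2/3})$; both depend non-trivially on the defining coordinate and hence are non-constant on every open set. Finally, a constant rescaling of $\varphi$ multiplies $g_\varphi$ by a positive constant, preserving both the sign and the (non-)constancy of the scalar curvature, so the three models collectively exhaust the possibilities and yield the desired contradiction. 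With the classification in hand, no real obstacle is encountered; the argument is immediate once one notes that constant sectional curvature propagates to constant scalar curvature and can then be checked example-by-example.
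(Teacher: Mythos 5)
Your argument is correct and is precisely the route the paper intends: Corollary \ref{cor:H7noG2} is stated without proof as an immediate consequence of Theorem \ref{thm:Class}, a metric of constant sectional curvature being in particular conformally flat. Your example-by-example check using the scalar curvatures recorded in \S\ref{ssect:Egs} (zero for the flat model, manifestly non-constant in $t$ resp.\ $r$ for the other two) correctly rules out all three models, and you handle the ``up to constant rescaling'' caveat properly.
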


\section{A related family of closed $\G_2$-structures}\label{sect:Lam25}

A key step in the classification \S\ref{sect:ConfClosedG2}  of closed $\G_2$-structures with conformally flat metric is the interesting coincidence that the vanishing of the Weyl curvature tensor of $g_{\varphi}$ is equivalent to equations (\ref{eqs:HCSConf}), which prescribe all of the second-order invariants $H, C,$ and $S$ of $\varphi$ as quadratic functions in the components of the first-order invariant $T.$

Motivated by this point, it is a reasonable question to ask if there are other closed $\G_2$-structures $\varphi$ for which the second-order invariants $H, C,$ and $S$ of $\varphi$ are given by quadratic functions in the components of the first-order invariant $T.$ Recall that $H, C,$ and $S$ are $\G_2$-equivariant functions on the $\G_2$-coframe bundle $\mathcal{B}$ taking values in $\V_{2,0}, \V_{1,1},$ and $\V_{0,2}$ respectively, and $T$ is a $\G_2$-equivariant function on $\mathcal{B}$ taking values in $\mathfrak{g}_2.$ 

There is an irreducible decomposition
\begin{equation}
\mathrm{Sym}^2 \left( \mathfrak{g}_2 \right) = \V_{0,2} \oplus \V_{2,0} \oplus \V_{0,0},
\end{equation}
and it follows from Schur's Lemma that there is a 2-parameter family of ways in which $H, C,$ and $S$ may be prescribed as a $\G_2$-equivariant quadratic function of the components of $T.$ Namely, for $\left( \lambda, \mu \right) \in \R^2,$ there are the equations
\begin{subequations}\label{eqs:Lam25HCS}
	\begin{align}
	& H_{ij} = \frac{1-6 \lambda}{7} \left( T_{ik}T_{kj} + \frac{1}{7} \delta_{ij} T_{pq}T_{pq} \right), \label{eq:QFroH} \\
	&C_{ijk} = 0, \label{eq:QFroC} \\
	&S_{ijkl} = \mu \left( 3 T_{ij}T_{kl} +\tfrac{3}{16} \left( 2\eps_{pij}\eps_{qkl} - 3 \eps_{pik}\eps_{qjl} + 3 \eps_{pil}\eps_{qjk} \right) T_{pm}T_{mq} \right. \label{eq:QFroS} \\
	& + \tfrac{9}{16} \left( \delta_{ik} T_{jm}T_{ml} - \delta_{il} T_{jm}T_{mk} - \delta_{jk} T_{im}T_{ml} + \delta_{jl} T_{im}T_{mk} \right) \nonumber \\
	& \left. + \tfrac{1}{16}  \left( 4 \eps_{ijkl} + \delta_{ik} \delta_{jl} - \delta_{il} \delta_{jk} \right) T_{pq}T_{pq} \right). \nonumber
	\end{align}
\end{subequations}
Note that equation (\ref{eq:QFroH}) is equivalent to the $\lambda$-quadratic equation (\ref{eq:LamQuad}).

Substituting equation (\ref{eqs:Lam25HCS}) into the structure equations (\ref{eq:CartanIG2}), (\ref{eq:dTcomp}), and (\ref{eq:dthetcomp}) and differentiating, the equations $d^2 T = 0$ and $d^2 \theta = 0$ are satisfied if and only if $(\lambda, \mu)$ is equal to $\left(-1/8, 1 \right)$ or $\left(2/5, -2/25 \right).$ The former case corresponds to the equations for conformal flatness (\ref{eqs:HCSConf}), while the latter case will be studied in the remainder of this section.

Substitution of equations (\ref{eqs:Lam25HCS}) with $(\lambda, \mu) = \left(2/5, -2/25 \right)$ into the structure equations (\ref{eq:CartanIG2}), (\ref{eq:dTcomp}), and (\ref{eq:dthetcomp}) gives the structure equations
\begin{subequations}\label{eqs:Lam25Struct}
	\begin{align}
	d & \omega_i = -\theta_{ij} \wedge \omega_j + \eps_{ijk} T_{kl} \omega_j \wedge \omega_l, \label{eq:Lam25CartI} \\
	d & T_{jk} = -\tfrac{1}{5} \left( \eps_{jkl} T_{lm}T_{mi} + 3 \eps_{kil} T_{lm}T_{mj} + 3 \eps_{ijl}T_{lm}T_{mk} + \eps_{ijk}T_{pq}T_{pq} \right)\omega_i \label{eq:Lam25dT} \\
	& + T_{jl}\theta_{lk}-T_{kl}\theta_{lj}, \nonumber \\	
	d & \theta_{ij} = - \theta_{ik} \wedge \theta_{kj} + \tfrac{1}{25} \left( -{6} T_{ij}T_{kl} \right. \label{eq:Lam25CartII} \\
	&\left. - \left( {2} \eps_{pij}\eps_{qkl} - {3} \eps_{pik}\eps_{qjl} + {3} \eps_{pil}\eps_{qjk} \right) T_{pm}T_{mq} \right. \nonumber  \\
	& - 13 \left( \delta_{ik} T_{jm}T_{ml} - \delta_{il} T_{jm}T_{mk} - \delta_{jk} T_{im}T_{ml} + \delta_{jl} T_{im}T_{mk} \right) \nonumber \\
	&+ 5 \left( \eps_{ijkq} T_{lm}T_{mq} - \eps_{lijq}T_{km}T_{mq} \right) \nonumber \\
	& \left. + \left( 2 \eps_{ijkl} - 7 \delta_{ik} \delta_{jl} + 7 \delta_{il} \delta_{jk} \right) T_{pq}T_{pq} \right) \omega_k \wedge \omega_l. \nonumber
	\end{align}
\end{subequations}
The derivatives of equations (\ref{eqs:Lam25Struct}) are identities.

By the discussion above, a closed $\G_2$-structure satisfying equations (\ref{eqs:Lam25HCS}) with $(\lambda, \mu)$ $= \left(2/5, -2/25 \right)$ is $2/5$-quadratic, and this rules out the existence of compact examples with non-zero torsion. The following proposition is proven in the exact same manner as Proposition \ref{prop:CpctCFlat}.

\begin{prop}\label{prop:CpctLam25}
	If $(M, \varphi)$ is a compact 7-manifold endowed with a closed $\G_2$-structure satisfying equations (\ref{eqs:Lam25HCS}) with $(\lambda, \mu) =$ $\left(2/5, -2/25 \right)$, then the metric $g_{\varphi}$ is flat and the $\G_2$-structure is locally equivalent to the standard $\G_2$-structure $\phi$ on $V = \R^7.$
\end{prop}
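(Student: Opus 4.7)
The plan is to follow the proof of Proposition \ref{prop:CpctCFlat} essentially verbatim. The setup of the equations (\ref{eqs:Lam25HCS}) with $(\lambda,\mu) = (2/5,-2/25)$ has been arranged precisely so that equation (\ref{eq:QFroH}) is the $\lambda$-quadratic equation (\ref{eq:LamQuad}) with $\lambda = 2/5$. Thus any closed $\G_2$-structure satisfying the hypothesis is $2/5$-quadratic.

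Next, I would invoke Bryant's theorem (the same result cited in the proof of Proposition \ref{prop:CpctCFlat}) stating that on a \emph{compact} 7-manifold, the only value of $\lambda \in \R$ for which a closed $\G_2$-structure can be $\lambda$-quadratic with torsion 2-form $\tau$ not identically zero is $\lambda = 1/6$. Since $2/5 \neq 1/6$, this forces $\tau \equiv 0$, and hence the torsion tensor $T$ vanishes identically.

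With $T \equiv 0$, the structure equations (\ref{eqs:Lam25Struct}) collapse to
\begin{subequations}
\begin{align}
d\omega_i &= -\theta_{ij} \wedge \omega_j, \\
d\theta_{ij} &= -\theta_{ik} \wedge \theta_{kj},
\end{align}
\end{subequations}
which are exactly the Maurer-Cartan equations of the Lie group $\G_2 \ltimes V$. By Cartan's theorem on maps into Lie groups (or by the argument already used in Proposition \ref{prop:CpctCFlat}), a coframing satisfying these equations is locally equivalent to the left-invariant Maurer-Cartan coframing on $\G_2 \ltimes V$, which is the coframe bundle of the standard flat $\G_2$-structure $\phi$ on $V = \R^7$. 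Hence $\varphi$ is locally equivalent to $\phi$ and $g_\varphi$ is flat.

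The only real content of the argument is the appeal to Bryant's compactness obstruction; once that step is in place, everything else is a direct transcription of the earlier proof, and the main obstacle is only to verify that the coefficient $\lambda = 2/5$ appearing in (\ref{eq:QFroH}) is indeed the $\lambda$ of the $\lambda$-quadratic equation with the same sign conventions. This can be checked by comparing (\ref{eq:QFroH}) with (\ref{eq:RiemCurvDecompWeyl20}) and (\ref{eq:dtauH}) using the identity $H_{ij} = \tfrac{1-6\lambda}{7}\bigl(T_{ik}T_{kj} + \tfrac{1}{7}\delta_{ij}T_{pq}T_{pq}\bigr)$, which is exactly the form in which (\ref{eq:QFroH}) is written.
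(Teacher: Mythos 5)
Your proof is correct and matches the paper's approach exactly: the paper states that Proposition \ref{prop:CpctLam25} "is proven in the exact same manner as Proposition \ref{prop:CpctCFlat}," namely by observing that the structure is $2/5$-quadratic, invoking Bryant's result that compactness forces $\tau \equiv 0$ unless $\lambda = 1/6$, and then reducing the structure equations to those of $\G_2 \ltimes V$. Nothing further is needed.
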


\subsection{Examples}\label{ssect:Lam25Egs}

This section contains three examples of closed $\G_2$-structures satisfying equations (\ref{eqs:Lam25HCS}) with $(\lambda, \mu) = \left(2/5, -2/25 \right).$

\subsubsection{The flat $\G_2$-structure on $\R^7.$}

The standard 3-form $\phi$ on $V=\R^7$ has $T=H=C=S=0,$ so equations (\ref{eqs:Lam25HCS}) are satisfied for all values of $\lambda$ and $\mu.$ 

It is clear from the form of equations (\ref{eqs:Lam25Struct}) that any torsion-free $\G_2$-structure satisfying equations (\ref{eqs:Lam25HCS}) with $(\lambda, \mu) = \left(2/5, -2/25 \right)$ must be locally equivalent to $\phi.$

\subsubsection{An example on $\R_{+} \times \R^6$}

Let $\mathrm{H}$ denote the 6-dimensional real Lie group underlying the complex 3-dimensional Heisenberg group. The complex Heisenberg group may be globally parametrised as
\begin{equation}
\mathbb{C}^2 \oplus \mathbb{C} \ni \left( x, z \right) \leftrightarrow \left( \begin{array}{ccc}
1 & x_1 & z + \tfrac{1}{2}x_1x_2 \\
0 & 1 & x_2 \\
0 & 0 & 1
\end{array}\right) \in \mathrm{H}.
\end{equation}
The group $\mathrm{U}(2)$ acts by automorphisms on $\mathrm{H}$ as $A \cdot \left( x, z \right) = \left( A x, \det \! A \: z \right).$ Let $\mathrm{G}$ denote the semi-direct product $\mathrm{U}(2) \ltimes \mathrm{H}$ with respect to this action. The left-invariant Maurer-Cartan forms of $\mathrm{U}(2)$ and $\mathrm{H}$ may be written as
\begin{equation}
\left( \begin{array}{cc}
-i \rho - i \xi_1 & \xi_2 + i \xi_3 \\
-\xi_2 + i \xi_3 & -i \rho + i \xi_1
\end{array}\right) \:\:\: \text{and} \:\:\: \left( \begin{array}{ccc}
0 & \eta_1 + i \eta_2 & -\tfrac{5}{3} \left(\alpha_2 + i \alpha_1 \right) \\
0 & 0 & \eta_3 + i \eta_4 \\
0 & 0 & 0
\end{array}\right)
\end{equation}
respectively. The structure equations of $\mathrm{G}$ are

%
 \begin{subequations}\label{eqs:Eg2Lam25}
 	\begin{align}
 	d \left(\begin{array}{c}
 	\eta_1 \\
 	\eta_2 \\
 	\eta_3 \\
 	\eta_4
 	\end{array}\right) &= - \left( \begin{array}{cccc}
 	0 & \xi_1 + \rho_1 & \xi_2 & \xi_3 \\
 	-\xi_1 - \rho_1 & 0 & -\xi_3 & \xi_2 \\
 	- \xi_2 & \xi_3 & 0 & -\xi_1 + \rho_1 \\
 	-\xi_3 & -\xi_2 & \xi_1 - \rho_1 & 0
 	\end{array}\right) \wedge \left(\begin{array}{c}
 	\eta_1 \\
 	\eta_2 \\
 	\eta_3 \\
 	\eta_4
 	\end{array}\right) , \\
 	d \left( \begin{array}{c}
 	\alpha_1 \\
 	\alpha_2
 	\end{array} \right) &= - \left( \begin{array}{cc}
 	0 & -2\rho_1 \\
 	2 \rho_1 & 0 
 	\end{array}\right) \wedge \left( \begin{array}{c}
 	\alpha_1 \\
 	\alpha_2
 	\end{array} \right) + \frac{3}{5} \left(\begin{array}{c}
 	\eta_1 \wedge \eta_4 + \eta_2 \wedge \eta_3 \\
 	\eta_1 \wedge \eta_3 - \eta_2 \wedge \eta_4
 	\end{array}\right),
 	\end{align}
 \end{subequations}
and $d \rho_1 = 0, \:\: d \xi_i = 2 \xi_j \wedge \xi_k$ for $\left(i,j,k \right)$ a cyclic permutation of $( 1,2,3).$
%
%
On the manifold $\R_+ \times \mathrm{G},$ with coordinate $t$ in the $\R_+$-direction, define a $\mathrm{G}$-invariant 3-form $\tilde{\varphi}$ by
\begin{equation}
\begin{aligned}
\tilde{\varphi} =& \tfrac{625}{t^4} d t \wedge \alpha_1 \wedge \alpha_2 + \tfrac{5}{t} d t \wedge \left( \eta_1 \wedge \eta_2 + \eta_3 \wedge \eta_4 \right) \\
&+ \tfrac{125}{t^3} \left( \alpha_1 \wedge \left( \eta_1 \wedge \eta_3 - \eta_2 \wedge \eta_4 \right) + \alpha_2 \wedge \left( -\eta_1 \wedge \eta_4 - \eta_2 \wedge \eta_3 \right) \right).
\end{aligned}
\end{equation}
The structure equations (\ref{eqs:Eg2Lam25}) imply that $d \tilde{\varphi} = 0.$ In fact, $\tilde{\varphi}$ is the pullback to $\R_+ \times \mathrm{G}$ of a closed $\G_2$-structure on $\R_+ \times \mathrm{G}/\mathrm{U}(2).$ Calculation shows that $\varphi$ satisfies equations (\ref{eqs:Lam25HCS}) with $(\lambda, \mu) = \left(2/5, -2/25 \right).$ The torsion 2-form is given by
\begin{equation}
\tau = \tfrac{25}{t^2} \left( \eta_1 \wedge \eta_2 + \eta_3 \wedge \eta_4 \right) - \tfrac{6250}{t^5} \alpha_1 \wedge \alpha_2.
\end{equation}

The scalar curvature of the induced metric $g_{\varphi}$ is
\begin{equation}
\mathrm{Scal}( g_{\varphi}) = -\frac{75}{t^2},
\end{equation}
and it follows that the metric $g_{\varphi}$ is incomplete.

The space $\mathrm{G}/\mathrm{U}(2)$ is diffeomorphic to $\R^6,$ so $\varphi$ defines a closed $\G_2$-structure on $\R_{+} \times \R^6.$

\subsubsection{An example on $\Lambda^2_{+} \mathbb{CH}^2$}

Write the left-invariant Maurer-Cartan form of $\SU(2,1)$ as
\begin{equation}
\mu = \left( \begin{array}{ccc}
\tfrac{2}{3} i \xi & \eta_4 + i \eta_5 & -\eta_7 - i \eta_6 \\
\eta_4 - i \eta_5 & i \zeta - \tfrac{1}{3} i \xi & \eta_2 + i \eta_3 \\
\eta_7 - i \eta_6 & \eta_2 - i \eta_3 & - i \zeta - \tfrac{1}{3} i \xi
\end{array} \right).
\end{equation}

On the manifold $\left( \R_+ \setminus \left\lbrace \tfrac{1}{2} \log 2 \right\rbrace \right) \times \SU(2,1)$ with coordinate $r$ in the $\R_+$-direction, define 1-forms $\alpha_1, \ldots, \alpha_7$ by
\begin{equation}
\begin{aligned}
\alpha_1 &= \frac{5 \: d r}{\sinh(r)^{\frac{2}{3}}} , & \left( \begin{array}{c}
\alpha_2 \\
\alpha_3
\end{array} \right) &= \frac{10 \cosh(r)} {\sinh(r)^{\frac{2}{3}}} \left( \begin{array}{c}
\eta_2 \\
\eta_3
\end{array}\right), \\
\left( \begin{array}{c}
\alpha_4 \\
\alpha_5
\end{array} \right) & = \frac{5\sqrt{2} \left( 2 e^{\tfrac{1}{2}t} - e^{-\tfrac{3}{2}t} \right) }{\sinh(r)^{\frac{1}{6}}} \left( \begin{array}{c}
\eta_4 \\
\eta_5
\end{array}\right), \\
\left( \begin{array}{c}
\alpha_6 \\
\alpha_7
\end{array} \right) & = \frac{5\sqrt{2} \left( 2 e^{-\tfrac{1}{2}t} - e^{\tfrac{3}{2}t} \right) }{\sinh(r)^{\frac{1}{6}}} \left( \begin{array}{c}
\eta_6 \\
\eta_7
\end{array}\right),
\end{aligned}
\end{equation}
and a 3-form $\tilde{\varphi}$ by
\begin{equation}
\tilde{\varphi} =  \alpha_{123} + \alpha_{145} + \alpha_{167} + \alpha_{246} - \alpha_{257} - \alpha_{347} - \alpha_{356},
\end{equation}
where $\alpha_{ijk}$ stands for the wedge product $\alpha_i \wedge \alpha_j \wedge \alpha_k.$ The Maurer-Cartan equation $d \mu = - \mu \wedge \mu$ implies that $\tilde{\varphi}$ satisfies $d \tilde{\varphi} = 0.$ In fact, $\tilde{\varphi}$ is the pullback to $\left( \R_+ \setminus \left\lbrace \tfrac{1}{2} \log 2 \right\rbrace \right) \times \SU(2,1)$ of a closed $\G_2$-structure on the space $\left( \R_+ \setminus \left\lbrace \tfrac{1}{2} \log 2 \right\rbrace \right) \times \times \SU(2,1) / \mathrm{T}^2.$ Similarly to example in \S\ref{sssect:Lam2CP2}, $\varphi$ extends over $\left\lbrace r = \tfrac{1}{2} \log 2 \right\rbrace$ to define a closed $\G_2$-structure on $\Lambda^2_+ \mathbb{CH}^2$ that is cohomogeneity-one under the action of $\mathrm{SU}(2,1).$ Computation using the Maurer-Cartan equation shows that $\varphi$ satisfies equations (\ref{eqs:Lam25HCS}) with $(\lambda, \mu) = \left(2/5, -2/25 \right).$

Letting $\pi: \mathrm{SU}(2,1) \to \mathrm{SU}(2,1)/\mathrm{T}^2$ denote the coset projection, the torsion 2-form $\tau$ of $\varphi$ satisfies
\begin{equation}
\tau = -\frac{2 \cosh(r)}{3 \sinh(r)^{\frac{1}{3}}} \alpha_{23} + \frac{3 \sinh(r) + \cosh(r)}{3 \sinh(r)^{\frac{1}{3}}} \alpha_{45} + \frac{3 \sinh(r) - \cosh(r)}{3 \sinh(r)^{\frac{1}{3}}} \alpha_{67},
\end{equation}
and the scalar curvature of $g_{\varphi}$ may be computed using formula (\ref{eq:RiemCurvDecompScal}) to be
\begin{equation}
\mathrm{Scal}(g_{\varphi}) = \frac{3 - 4 \cosh(r)^2}{3 \sinh(r)^{\frac{2}{3}}} .
\end{equation}
Note that the scalar curvature blows up as $r \to \infty,$ and $r \to 0^{+}$ and these are at finite distance, since
\begin{equation}
\int_{r=0}^{r=\infty} \alpha_1 = \int_{r=0}^{r=\infty} \frac{5 \: d r}{\sinh(r)^{\frac{2}{3}}} < \infty.
\end{equation}
It follows that the metric $g_{\varphi}$ is incomplete.

\subsection{Classification}

The classification of closed $\G_2$-structures satisfying equations (\ref{eqs:Lam25HCS}) with $(\lambda, \mu) = \left(2/5, -2/25 \right)$ proceeds in exactly the same manner as the classification of closed $\G_2$-structures with conformally flat induced metric, and the following theorems are proven using the same techniques as are used in the proofs of the corresponding theorems in \S\ref{ssect:ConfClass}.

\begin{thm}\label{thm:Lam25Exist}
	For any $T_0 \in \mathfrak{g}_2,$ there exists a closed $\G_2$-structure $\varphi$ satisfying equations (\ref{eqs:Lam25HCS}) with $(\lambda, \mu) = \left(2/5, -2/25 \right)$ on a neighbourhood $U$ of $0 \in \R^7$ whose $\G_2$-coframe bundle $\pi : \mathcal{B} \to U$ contains a $u_0 \in \mathcal{B}_0 = \pi^{-1} \left( 0 \right)$ for which $T \left( u_0 \right) = T_0.$ Any two real analytic closed $\G_2$-structures satisfying equations (\ref{eqs:Lam25HCS}) with $(\lambda, \mu) = \left(2/5, -2/25 \right)$ that satisfy this property are isomorphic on a neighbourhood of $0 \in \R^7.$ Furthermore, any closed $\G_2$-structure satisfying equations (\ref{eqs:Lam25HCS}) with $(\lambda, \mu) = \left(2/5, -2/25 \right)$ that is $C^2$ is real-analytic.
\end{thm}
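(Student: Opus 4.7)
The plan is to apply Theorem 2 of \cite{BryEDSNotes} (Cartan's generalisation of Lie's Third Fundamental Theorem) to the structure equations (\ref{eqs:Lam25Struct}), following the exact same line of argument used to establish Theorem \ref{thm:CFlatExist}. The key prerequisite---that the exterior derivatives of (\ref{eqs:Lam25Struct}) are identities---has already been verified in the discussion preceding the theorem, where it was observed that $(\lambda, \mu) = (2/5, -2/25)$ is one of the only two pairs (the other being $(-1/8, 1)$) for which $d^2T = 0$ and $d^2\theta = 0$. This algebraic compatibility is the essential non-trivial point; with it in hand, Cartan's Theorem produces, for any prescribed $T_0 \in \mathfrak{g}_2$, a real-analytic manifold $N$ of dimension $21 = \dim V + \dim \mathfrak{g}_2$ equipped with a real-analytic coframing $(\omega, \theta)$ valued in $V \oplus \mathfrak{g}_2$ and a real-analytic function $T \colon N \to \mathfrak{g}_2$ satisfying (\ref{eqs:Lam25Struct}) together with $T(u_0) = T_0$ at some point $u_0 \in N$.

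Next I would mimic the Frobenius--coframe-bundle step from the proof of Theorem \ref{thm:CFlatExist}. By (\ref{eq:Lam25CartI}), the equation $\omega = 0$ defines an integrable codimension-$7$ plane field on $N$; after shrinking $N$ to a neighbourhood of $u_0$, Frobenius yields a submersion $y \colon N \to \R^7$ with $y(u_0) = 0$ whose fibres are the leaves. The $3$-form
\begin{equation*}
\upvarphi = \tfrac{1}{6}\,\eps_{ijk}\,\omega_i \wedge \omega_j \wedge \omega_k
\end{equation*}
is closed and $y$-semibasic, so it descends to a closed $\G_2$-structure $\varphi$ on $U = y(N)$. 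Defining $\sigma \colon N \to \mathcal{B}$ by sending $u$ to the unique $\G_2$-coframe at $y(u)$ satisfying $\omega_u = \sigma(u) \circ dy_u$ produces an open immersion into the $\G_2$-coframe bundle $\mathcal{B}$ of $\varphi$. After shrinking once more, $N$ is identified with an open subset of $\mathcal{B}$; under this identification (\ref{eqs:Lam25Struct}) becomes the structure equations of $\mathcal{B}$, forcing the second-order invariants $H, C, S$ of $\varphi$ to satisfy (\ref{eqs:Lam25HCS}) with $(\lambda, \mu) = (2/5, -2/25)$ and giving $T(u_0) = T_0$.

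Uniqueness in the real-analytic category is immediate from the uniqueness clause of Theorem 2 of \cite{BryEDSNotes}. For the $C^2$ statement, the structure equations (\ref{eqs:Lam25Struct}) can be solved by ODE techniques as soon as $\varphi$ is $C^2$: this regularity is enough to ensure that $\mathcal{B}$ exists as a differentiable bundle and that the torsion $T$ and connection $\theta$ are differentiable, so the uniqueness of the coframing solving (\ref{eqs:Lam25Struct}) with initial value $T_0$ holds in this weaker category. Comparison with the real-analytic example produced above then upgrades an arbitrary $C^2$ solution to real-analytic.

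The main obstacle is not the argument itself---which is a near-verbatim translation of the proof of Theorem \ref{thm:CFlatExist}---but rather the algebraic verification that $d^2 = 0$ holds for (\ref{eqs:Lam25Struct}) precisely at $(\lambda, \mu) = (2/5, -2/25)$. This verification has already been carried out in the preamble to the theorem, so nothing further is required beyond invoking Cartan's Theorem and repeating the Frobenius/coframe-bundle bookkeeping. The only conceptual point worth flagging is that the rigidity of the ansatz---the fact that $d^2 = 0$ pins down $(\lambda, \mu)$ to exactly two possibilities---is what makes both this theorem and Theorem \ref{thm:CFlatExist} possible by Cartan's method in the first place.
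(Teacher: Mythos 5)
Your proposal is correct and follows essentially the same route as the paper, which itself proves Theorem \ref{thm:Lam25Exist} by observing that the argument for Theorem \ref{thm:CFlatExist} applies verbatim once the structure equations (\ref{eqs:Lam25Struct}) are known to have exterior derivatives that are identities. You have correctly identified that this algebraic compatibility, already verified in the preamble, is the only substantive input, and the remaining Cartan/Frobenius/coframe-bundle steps and the $C^2$-to-analytic bootstrap carry over unchanged.
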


\begin{thm}\label{thm:Lam25Class}
	A closed $\G_2$-structure $\varphi$ satisfying equations (\ref{eqs:Lam25HCS}) with $(\lambda, \mu) = \left(2/5, -2/25 \right)$ is, up to constant rescaling, locally equivalent to one of the three examples $\R^7, \R_+ \times \R^6,$ and $\Lambda^2_+ \mathbb{CH}^2$ presented in \S\ref{ssect:Lam25Egs}.
\end{thm}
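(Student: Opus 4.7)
The plan is to mimic the proof of Theorem \ref{thm:Class} essentially line by line, using Theorem \ref{thm:Lam25Exist} in place of Theorem \ref{thm:CFlatExist}. By \S\ref{sssect:V0qreps} the orbit space $\mathfrak{g}_2/\G_2$ of the adjoint action is parametrised by the set $A$ of elements $\mu_1 E_{23} + \mu_2 E_{45} - (\mu_1 + \mu_2) E_{67}$ with $0 \leq \mu_1 \leq \mu_2$. Theorem \ref{thm:Lam25Exist} says that two germs of closed $\G_2$-structures solving (\ref{eqs:Lam25HCS}) with $(\lambda,\mu) = (2/5, -2/25)$ are isomorphic if and only if their respective orbit-maps $[T] \colon M \to A$ agree at the base points. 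Since equations (\ref{eqs:Lam25HCS}) are homogeneous of degree two in $T$, a constant rescaling of $\varphi$ preserves the class of such $\G_2$-structures while multiplying $T$ by a positive constant. The theorem therefore reduces to verifying the set equality
\begin{equation*}
\R_+ \cdot [T]\bigl( N_1 \cup N_2 \cup N_3 \bigr) = A,
\end{equation*}
where $N_1 = \R^7$, $N_2 = \R_+ \times \mathrm{G}/\mathrm{U}(2)$, and $N_3 = \Lambda^2_+ \mathbb{CH}^2$ are the examples of \S\ref{ssect:Lam25Egs}.

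For $N_1$ the torsion vanishes and $[T](N_1) = \{0\}$. For $N_2$, I would first construct a $\G_2$-coframe by rescaling the generators $(dt, \alpha_i, \eta_j)$ so that the given expression for $\tilde{\varphi}$ matches the canonical model $\phi = \tfrac{1}{6}\eps_{ijk}e^{ijk}$, then rewrite the given $\tau$ in that orthonormal coframe and extract $T(t) \in \mathfrak{g}_2$ directly. By analogy with the conformally flat example of \S\ref{sssect:EgNonFlatR7}, I expect the $t$-dependence to collapse to a single overall factor in the orthonormal frame, so that $[T](N_2)$ is a single ray in $A$. For $N_3$ the coframe $(\alpha_1, \ldots, \alpha_7)$ is already $\G_2$-adapted, so the explicit expression
\begin{equation*}
\tau = -\tfrac{2\cosh r}{3\sinh(r)^{1/3}}\alpha_{23} + \tfrac{3\sinh r + \cosh r}{3\sinh(r)^{1/3}}\alpha_{45} + \tfrac{3\sinh r - \cosh r}{3\sinh(r)^{1/3}}\alpha_{67}
\end{equation*}
presents $T(r)$ directly as a block-diagonal element of $\mathfrak{g}_2$, whose $\G_2$-orbit may be identified with an element of $A$ after using the Weyl group to sort the eigenvalues into the required order.

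The main obstacle is the final algebraic check that the images of $N_2$ and $N_3$, after $\R_+$-scaling, fill all of $A \setminus \{0\}$. The ray $\R_+ \cdot [T](N_2)$ should pick out a codimension-one locus (reflecting a symmetry-enforced equality among the eigenvalues of $T$ stemming from the $\mathrm{U}(2)$-invariance of the example), while the map $(c, r) \mapsto c \cdot [T(r)]$ determined by $N_3$ has 2-dimensional source and 2-dimensional target. Surjectivity onto the open complement of the ray would follow by computing a Jacobian determinant, checking its non-vanishing on the interior, and examining the limiting behaviour of the curve $r \mapsto [T(r)]$ as $r \to 0^+$, $r \to \tfrac{1}{2}\log 2$, and $r \to \infty$ to see that the boundary values sweep out the full range. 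Once this surjectivity is verified, the theorem is immediate from the uniqueness part of Theorem \ref{thm:Lam25Exist}.
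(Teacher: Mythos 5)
Your proposal is correct and follows essentially the same route as the paper, which gives no separate argument for Theorem \ref{thm:Lam25Class} beyond stating that it is proven by the method of Theorem \ref{thm:Class}: reduce via the uniqueness part of Theorem \ref{thm:Lam25Exist} to showing $\R_+ \cdot [T](N_1 \cup N_2 \cup N_3) = A$, and verify this by reading off the torsion orbit of each example from its explicit $\tau$. Your anticipated picture (a single ray from the $\mathrm{U}(2)$-invariant example and a one-parameter curve from the cohomogeneity-one example whose $\R_+$-span fills the rest of the Weyl chamber) is exactly what the analogous computation in the proof of Theorem \ref{thm:Class} produces.
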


\bibliography{ConfClosedG2Refs}

\end{document}